\newtheorem{theorem}{Theorem}[section]
\newtheorem{definition}[theorem]{Definition}
\newtheorem{proposition}[theorem]{Proposition}
\begin{document}

\title{Truncation and duality results for Hopf image algebras}

\author{Teodor Banica}
\address{T.B.: Department of Mathematics, Cergy-Pontoise University, 95000 Cergy-Pontoise, France. {\tt teodor.banica@u-cergy.fr}}

\subjclass[2000]{46L65 (46L37)}
\keywords{Quantum permutation, Hadamard matrix}

\begin{abstract}
Associated to an Hadamard matrix $H\in M_N(\mathbb C)$ is the spectral measure $\mu\in\mathcal P[0,N]$ of the corresponding Hopf image algebra, $A=C(G)$ with $G\subset S_N^+$. We study here a certain family of discrete measures $\mu^r\in\mathcal P[0,N]$, coming from the idempotent state theory of $G$, which converge in Ces\`aro limit to $\mu$. Our main result is a duality formula of type $\int_0^N(x/N)^pd\mu^r(x)=\int_0^N(x/N)^rd\nu^p(x)$, where $\mu^r,\nu^r$ are the truncations of the spectral measures $\mu,\nu$ associated to $H,H^t$. We prove as well, using these truncations $\mu^r,\nu^r$, that for any deformed Fourier matrix $H=F_M\otimes_QF_N$ we have $\mu=\nu$.
\end{abstract}

\maketitle

\section*{Introduction}

A complex Hadamard matrix is a square matrix $H\in M_N(\mathbb C)$ whose entries are on the unit circle, $|H_{ij}|=1$, and whose rows are pairwise orthogonal. The basic example of such a matrix is the Fourier one, $F_N=(w^{ij})$ with $w=e^{2\pi i/N}$:
$$F_N=\begin{pmatrix}
1&1&1&\ldots&1\\
1&w&w^2&\ldots&w^{N-1}\\
\ldots&\ldots&\ldots&\ldots&\ldots\\
1&w^{N-1}&w^{2(N-1)}&\ldots&w^{(N-1)^2}
\end{pmatrix}$$

In general, the theory of complex Hadamard matrices can be regarded as a ``non-standard'' branch of discrete Fourier analysis. For a number of potential applications to quantum physics and quantum information theory questions, see \cite{ben}, \cite{tzy}, \cite{wer}.

Each Hadamard matrix $H\in M_N(\mathbb C)$ is known to produce a subfactor $M\subset R$ of the Murray-von Neumann hyperfinite factor $R$, having index $[R:M]=N$. The associated planar algebra $P=(P_k)$ has a direct description in terms of $H$, worked out in \cite{jsu}, and a key problem is that of computing the corresponding Poincar\'e series, given by:
$$f(z)=\sum_{k=0}^\infty\dim(P_k)z^k$$

An alternative approach to this question is via quantum groups \cite{wo1}, \cite{wo2}. The idea is that associated to $H\in M_N(\mathbb C)$ is a quantum subgroup $G\subset S_N^+$ of Wang's quantum permutation group \cite{wan}, constructed by using to the Hopf image method, developed in \cite{bbi}. More precisely, $G\subset S_N^+$ appears via a factorization diagram, as follows:
$$\xymatrix{C(S_N^+)\ar[rr]^\pi\ar[rd]&&M_N(\mathbb C)\\&C(G)\ar[ur]_\rho&}$$

Here the upper arrow is defined by $\pi:u_{ij}\to P_{ij}=Proj(H_i/H_j)$, where $u_{ij}$ are the standard generators of $C(S_N^+)$, and where $H_1,\ldots,H_N\in\mathbb T^N$ are the rows of $H$. The lower left arrow is by definition transpose to the embedding $G\subset S_N^+$, and the quantum group $G\subset S_N^+$ itself is by definition the minimal one producing such a factorization.

With this notion in hand, the problem is that of computing the spectral measure $\mu$ of the main character $\chi:G\to\mathbb C$. This is indeed the same problem as above, because by Woronowicz's Tannakian duality \cite{wo2}, $f$ is the Stieltjes transform of $\mu$:
$$f(z)=\int_G\frac{1}{1-z\chi}$$

Here, and in what follows, we use the integration theory developed in \cite{wo1}.

As a basic example, for a Fourier matrix $F_N$ the associated quantum group $G\subset S_N^+$ is the cyclic group $\mathbb Z_N$, and we therefore have $\mu=(1-\frac{1}{N})\delta_0+\frac{1}{N}\delta_N$ in this case. In general, however, the computation of $\mu$ is a quite difficult question. See \cite{bfs}.

In this paper we discuss a certain truncation procedure for the main spectral measure, coming from the idempotent state theory of the associated quantum group \cite{bfs}, \cite{fsk}. Consider the following functionals, where $*$ is the convolution, $\psi*\phi=(\psi\otimes\phi)\Delta$:
$$\int_G^r=(tr\circ\rho)^{*r}$$

The point with these functionals is that, as explained in \cite{bfs}, we have the following Ces\`aro limiting result, coming from the general results of Woronowicz in \cite{wo1}:
$$\int_G\varphi=\lim_{k\to\infty}\frac{1}{k}\sum_{r=1}^k\int_G^r\varphi$$

This formula can be of course used in order to estimate or exactly compute the various integrals over $G$, and doing so will be the main idea in the present paper. 

At the level of the main character, we have the following result:

\medskip

\noindent {\bf Theorem A.} {\em The law $\chi$ with respect to $\int_G^r$ equals the law of the Gram matrix
$$X_{i_1\ldots i_r,j_1\ldots j_r}=<\xi_{i_1\ldots i_r},\xi_{j_1\ldots j_r}>$$
of the norm one vectors $\xi_{i_1\ldots i_r}=\frac{1}{\sqrt{N}}\cdot\frac{H_{i_1}}{H_{i_2}}\otimes\ldots\otimes\frac{1}{\sqrt{N}}\cdot\frac{H_{i_r}}{H_{i_1}}$.}

\medskip

Here the law of $X$ is by definition its spectral measure, with respect to the trace.

Observe that with $r\to\infty$, via the above-mentioned Ces\`aro limiting procedure, we obtain from the laws in Theorem A the spectral measure $\mu$ that we are interested in.

Our second, and main theoretical result, is as follows:

\medskip

\noindent {\bf Theorem B.} {\em We have the moment/truncation duality formula
$$\int_{G_H}^r\left(\frac{\chi}{N}\right)^p=\int_{G_{H^t}}^p\left(\frac{\chi}{N}\right)^r$$
where $G_H,G_{H^t}$ are the quantum groups associated to $H,H^t$.}

\medskip

This formula, which is quite non-trivial, is probably quite interesting, in connection with the duality between the quantum groups $G_H,G_{\overline{H}},G_{H^t},G_{H^*}$ studied in \cite{ban}.

As an illustration for the above methods, we will work out the case of the deformed Fourier matrices, $H=F_N\otimes_QF_M$, with the following result:

\medskip

\noindent {\bf Theorem C.} {\em For $H=F_N\otimes_QF_M$ we have the self-duality formula
$$\int_{G_H}\varphi(\chi)=\int_{G_{H^t}}\varphi(\chi)$$
valid for any parameter matrix $Q\in M_{M\times N}(\mathbb T)$.}

\medskip

The paper is organized as follows: 1-2 are preliminary sections, and in 3-4-5 we present the truncation procedure, and we prove Theorems A-B-C above.

\medskip

\noindent {\bf Acknowledgements.} I would like to thank Julien Bichon, Pierre Fima, Uwe Franz, Adam Skalski and Roland Vergnioux for several interesting discussions.

\section{Hadamard matrices}

A complex Hadamard matrix is a matrix $H\in M_N(\mathbb C)$ whose entries are on the unit circle, and whose rows are pairwise orthogonal. The basic example is the Fourier matrix, $F_N=(w^{ij})$ with $w=e^{2\pi i/N}$.  More generally, we have as example the Fourier matrix $F_G=F_{N_1}\otimes\ldots\otimes F_{N_k}$ of any finite abelian group $G=\mathbb Z_{N_1}\times\ldots\times\mathbb Z_{N_k}$. See \cite{tzy}.

The complex Hadamard matrices are usually regarded modulo equivalence:

\begin{definition}
Two complex Hadamard matrices $H,K\in M_N(\mathbb C)$ are called equivalent, and we write $H\sim K$, if one can pass from one to the other by permuting the rows and columns, or by multiplying these rows and columns by numbers in $\mathbb T$.
\end{definition}

As explained in the introduction, each complex Hadamard matrix produces a subfactor $M\subset R$ of the Murray-von Neumann hyperfinite factor $R$, having index $[R:M]=N$, which can be understood in terms of quantum groups. Indeed, let us call ``magic'' any square matrix $u=(u_{ij})$ whose entries are projections ($p=p^2=p^*$), summing up to $1$ on each row and column. We have then the following key definition, due to Wang \cite{wan}:

\begin{definition}
$C(S_N^+)$ is the universal $C^*$-algebra generated by the entries of a $N\times N$ magic matrix $u=(u_{ij})$, with comultiplication, counit and antipode maps defined on the standard generators by $\Delta(u_{ij})=\sum_ku_{ik}\otimes u_{kj}$, $\varepsilon(u_{ij})=\delta_{ij}$ and $S(u_{ij})=u_{ji}$.
\end{definition}

As explained in \cite{wan}, this algebra satisfies Woronowicz's axioms in \cite{wo1}, and so $S_N^+$ is a compact quantum group, called quantum permutation group. Since the functions $v_{ij}:S_N\to\mathbb C$ given by $v_{ij}(\sigma)=\delta_{i\sigma(j)}$ form a magic matrix, we have a quotient map $C(S_N^+)\to C(S_N)$, which corresponds to an embedding $S_N\subset S_N^+$. This embedding is an isomorphism at $N=1,2,3$, but not at $N\geq4$, where $S_N^+$ is not finite. See \cite{wan}.

The link with the Hadamard matrices comes from:

\begin{definition}
Associated to an Hadamard matrix $H\in M_N(\mathbb T)$ is the minimal quantum group $G\subset S_N^+$ producing a factorization of type
$$\xymatrix{C(S_N^+)\ar[rr]^\pi\ar[rd]&&M_N(\mathbb C)\\&C(G)\ar[ur]_\rho&}$$
where $\pi:u_{ij}\to P_{ij}=Proj(H_i/H_j)$, where $H_1,\ldots,H_N\in\mathbb T^N$ are the rows of $H$.
\end{definition}

Here the fact that $\pi$ is indeed well-defined follows from the fact that $P=(P_{ij})$ is magic, which comes from the fact that the rows of $H$ are pairwise orthogonal. As for the existence and uniqueness of the quantum group $G\subset S_N^+$ as in the statement, this comes from Hopf algebra theory, by dividing $C(S_N^+)$ by a suitable ideal. See \cite{bbi}.

At the level of examples, it is known that the Fourier matrix $F_G$ produces the group $G$ itself. In general, the computation of $G$ is a quite difficult question. See \cite{bfs}.

At a theoretical level, it is known that the above-mentioned subfactor $M\subset R$ associated to $H$ appears as a fixed point subfactor associated to $G$. See \cite{ban}.

In what follows we will rather use a representation-theoretic formulation of this latter result. Let $u=(u_{ij})$ be the fundamental representation of $G$.

\begin{definition}
We let $\mu\in\mathcal P[0,N]$ be the law of variable $\chi=\sum_iu_{ii}$, with respect to the Haar integration functional of $C(G)$.
\end{definition}

Note that the main character $\chi=\sum_iu_{ii}$ being a sum of $N$ projections, we have the operator-theoretic formula $0\leq\chi\leq N$, and so $supp(\mu)\subset[0,N]$, as stated above.

Observe also that the moments of $\mu$ are integers, because we have the following computation, based on Woronowicz's general Peter-Weyl type results in \cite{wo1}:
$$\int_0^Nx^kd\mu(x)=\int_GTr(u)^k=\int_GTr(u^{\otimes k})=\dim(Fix(u^{\otimes k}))$$

The above moments, or rather the fixed point spaces appearing on the right, can be computed by using the following fundamental result, from \cite{bbi}:

\begin{theorem}
We have an equality of complex vector spaces
$$Fix(u^{\otimes k})=Fix(P^{\otimes k})$$
where for $X\in M_N(A)$ we set $X^{\otimes k}=(X_{i_1j_1}\ldots X_{i_kj_k})_{i_1\ldots i_k,j_1\ldots j_k}$.
\end{theorem}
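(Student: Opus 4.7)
The plan is to prove the two inclusions separately. The inclusion $Fix(u^{\otimes k}) \subset Fix(P^{\otimes k})$ is essentially immediate from the definitions, while the reverse inclusion is the substantive content and requires the universal property of the Hopf image construction.

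For the first inclusion, suppose $\xi \in Fix(u^{\otimes k})$, meaning $\sum_j u^{\otimes k}_{ij} \xi_j = \xi_i \cdot 1_{C(G)}$ for every multi-index $i$. Applying the $*$-homomorphism $\rho : C(G) \to M_N(\mathbb C)$ entrywise, and using $\rho(u_{ab}) = P_{ab}$ together with multiplicativity to get $\rho(u^{\otimes k}_{ij}) = P^{\otimes k}_{ij}$, we obtain $\sum_j P^{\otimes k}_{ij} \xi_j = \xi_i \cdot I_N$, which is exactly the condition $\xi \in Fix(P^{\otimes k})$.

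For the reverse inclusion, I would invoke the minimality of the Hopf image. Given $\xi \in Fix(P^{\otimes k})$, form the quotient of $C(S_N^+)$ by the ideal generated by the elements $\sum_j v^{\otimes k}_{ij} \xi_j - \xi_i \cdot 1$, where $v$ is the fundamental corepresentation of $S_N^+$ and $i$ ranges over all multi-indices. The key step is to verify that this ideal is a Hopf ideal: using the coproduct formula $\Delta(v^{\otimes k}_{ij}) = \sum_l v^{\otimes k}_{il} \otimes v^{\otimes k}_{lj}$ together with the fixed-vector relation itself closes the $\Delta$-invariance computation, while $\varepsilon$-invariance is trivial and $S$-invariance follows from $S(v_{ij}) = v_{ji}$ via a dual fixed-vector argument. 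Letting $G_\xi \subset S_N^+$ denote the resulting quantum subgroup, the assumption $\xi \in Fix(P^{\otimes k})$ is precisely the statement that $\pi : C(S_N^+) \to M_N(\mathbb C)$ annihilates these new relations, so $\pi$ factors through $C(G_\xi)$. The minimality of the Hopf image then forces $G \subset G_\xi$, so the defining relations of $G_\xi$ already hold in $C(G)$, giving $\xi \in Fix(u^{\otimes k})$. The main technical obstacle here is the Hopf-ideal verification, which is essentially where the technology of \cite{bbi} does the real work; phrased in Tannakian language, the same point is the statement that the collection $\{\text{Hom}(P^{\otimes k}, P^{\otimes l})\}_{k,l}$ of $M_N(\mathbb C)$-valued intertwiners forms a well-defined Tannakian category corresponding to the quantum group $G$.
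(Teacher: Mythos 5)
The paper gives no proof of this statement: it is quoted as a ``fundamental result from \cite{bbi}'', so there is no in-paper argument to compare against. Your proof is correct, and it is essentially the standard argument behind the cited result: the inclusion $Fix(u^{\otimes k})\subset Fix(P^{\otimes k})$ by applying $\rho$, and the reverse inclusion by noting that the fixed-vector relations cut out a quantum subgroup $G_\xi\subset S_N^+$ through which $\pi$ factors, so that minimality of the Hopf image forces the relations to hold already in $C(G)$. The one step you flag without carrying out --- stability under the antipode of the ideal $I$ generated by the elements $a_i=\sum_j v^{\otimes k}_{ij}\xi_j-\xi_i\cdot 1$ --- does go through, and here is the missing detail: since the entries of $v$ are projections, $S(v^{\otimes k}_{ij})=v_{j_ki_k}\cdots v_{j_1i_1}=(v^{\otimes k})^*_{ij}$, hence $S(a_i)=((v^{\otimes k})^*\xi)_i-\xi_i$; unitarity of $v^{\otimes k}$ in $C(S_N^+)$ then gives $S(a_i)=\bigl((v^{\otimes k})^*(\xi-v^{\otimes k}\xi)\bigr)_i=-\sum_j(v^{\otimes k})^*_{ij}a_j\in I$, while for the coproduct one has $\Delta(a_i)=\sum_l v^{\otimes k}_{il}\otimes a_l+a_i\otimes 1$, exactly as you indicate. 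The Tannakian reformulation you mention at the end is in fact the route taken in \cite{bbi} itself, via Woronowicz's Tannaka--Krein duality; your generators-and-relations version is equivalent and arguably more self-contained for this particular fixed-point statement.
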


Now back to the subfactor problematics, it is known from \cite{jsu} that the planar algebra associated to $H$ is given by $P_k=Fix(P^{\otimes k})$. Thus, Theorem 1.5 tells us that the Poincar\'e series $f(z)=\sum_{k=0}^\infty\dim(P_k)z^k$ is nothing but the Stieltjes transform of $\mu$:
$$f(z)=\int_G\frac{1}{1-z\chi}$$

Summarizing, modulo some standard correspondences, the main subfactor problem regarding $H$ consists in computing the spectral measure $\mu$ in Definition 1.4.

\section{Finiteness, duality}

We discuss in this section a key issue, namely the formulation of the duality between the quantum permutation groups associated to the matrices $H,\overline{H},H^t,H^*$. Our claim is that the general scheme for this duality is, roughly speaking, as follows:
\vskip-10mm
$$\xymatrix{H\ar@{-}[d]\ar@{-}[r]&H^t\\ \overline{H}&H^*\ar@{-}[l]\ar@{-}[u]}\quad\begin{matrix}\\ \\ \\ \implies\end{matrix}\quad\xymatrix{G\ar@{-}[d]\ar@{-}[r]&\widehat{G}\\ G^\sigma&\widehat{G}^\sigma\ar@{-}[l]\ar@{-}[u]}$$

More precisely, this scheme fully works when the quantum groups are finite. In the general case the situation is more complicated, as explained in \cite{ban}. 

The results in \cite{ban}, written some time ago, in the general context of vertex models, and without using the Hopf image formalism in \cite{bbi}, are in fact not very enlightening in the Hadamard matrix case. We will present below an updated approach. First, we have:

\begin{proposition}
The matrices $P=(P_{ij})$ for $H,\overline{H},H^t,H^*$ are related by:
\vskip-8mm
$$\xymatrix{H\ar@{-}[d]\ar@{-}[r]&H^t\\ \overline{H}&H^*\ar@{-}[l]\ar@{-}[u]}\quad\begin{matrix}\\ \\ \\ \implies\end{matrix}\quad\xymatrix{(P_{ij})_{kl}\ar@{-}[d]\ar@{-}[r]&(P_{kl})_{ij}\\ (P_{ji})_{kl}&(P_{kl})_{ji}\ar@{-}[l]\ar@{-}[u]}$$
In addition, we have the formula $(P_{ij})_{kl}=(P_{ji})_{lk}$.
\end{proposition}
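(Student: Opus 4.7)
The plan is to extract one explicit formula for the entries $(P^H_{ij})_{kl}$ of the magic matrix attached to $H$, put it in a manifestly symmetric form by using $|H_{ab}|=1$, and then read off all four claims by direct substitution.

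First I would use the fact that $P^H_{ij}=Proj(H_i/H_j)$ is the rank-one projection onto a vector of squared norm $N$, so that
$$(P^H_{ij})_{kl}=\frac{1}{N}\cdot\frac{H_{ik}}{H_{jk}}\cdot\overline{\left(\frac{H_{il}}{H_{jl}}\right)}=\frac{1}{N}\cdot\frac{H_{ik}\overline{H_{il}}}{H_{jk}\overline{H_{jl}}}.$$
Replacing each conjugate $\overline{H_{ab}}$ by $H_{ab}^{-1}$, which is permitted because $|H_{ab}|=1$, this rewrites as
$$(P^H_{ij})_{kl}=\frac{1}{N}\cdot\frac{H_{ik}H_{jl}}{H_{il}H_{jk}}. \qquad(\ast)$$

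From $(\ast)$ the extra identity $(P_{ij})_{kl}=(P_{ji})_{lk}$ is immediate, because the right-hand side is invariant under the simultaneous swap $i\leftrightarrow j$, $k\leftrightarrow l$. For the three diagram relations I would then substitute into $(\ast)$ the replacements $H_{ab}\mapsto\overline{H_{ab}}$, $H_{ab}\mapsto H_{ba}$ and $H_{ab}\mapsto\overline{H_{ba}}$ corresponding respectively to $\overline H$, $H^t$ and $H^*$, clear the resulting conjugates once more using $|H|=1$, and match the outcome with the value of $(\ast)$ evaluated at the appropriate permutation of the index quadruple $(i,j;k,l)$.

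There is no conceptual obstacle here; the proof is a short bookkeeping exercise once $(\ast)$ is available, which is why deriving the symmetric form $(\ast)$ up front is worth the effort. The only mild subtlety is an apparent redundancy in the $H^*$ case, where $(P^{H^*}_{ij})_{kl}$ coincides both with $(P^H_{kl})_{ji}$ and with $(P^H_{lk})_{ij}$ — but this agreement is nothing more than the extra identity $(P_{ij})_{kl}=(P_{ji})_{lk}$ read twice, so it functions as a consistency check rather than an obstruction.
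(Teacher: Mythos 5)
Your proposal is correct and follows essentially the same route as the paper: both hinge on the explicit formula $(P^H_{ij})_{kl}=\frac{1}{N}\cdot\frac{H_{ik}H_{jl}}{H_{il}H_{jk}}$ (which the paper states directly and you derive from the rank-one projection onto the norm-$\sqrt N$ vector $H_i/H_j$), after which every claim is read off by substituting $\overline{H}$, $H^t$, $H^*$ and clearing conjugates via $|H_{ab}|=1$. The only cosmetic difference is that the paper treats the $\overline H$ case by the vector-level observation $H_i/H_j\mapsto H_j/H_i$ before writing the entrywise formula, whereas you do everything from $(\ast)$; this changes nothing of substance.
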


\begin{proof}
The magic matrix associated to $H$ is given by $P_{ij}=Proj(H_i/H_j)$. Now since $H\to\overline{H}$ transforms $H_i/H_j\to H_j/H_i$, we conclude that the magic matrices $P^H,P^{\overline{H}}$ associated to $H,\overline{H}$ are related by the formula $P_{ij}^H=P_{ji}^{\overline{H}}$, as stated above. 

In matrix notation, the formula for the matrix $P^H$ is as follows: 
$$(P^H_{ij})_{kl}=\frac{1}{N}\cdot\frac{H_{ik}H_{jl}}{H_{il}H_{jk}}$$

Now by replacing $H\to H^t$, we obtain the following formula:
$$(P^{H^t}_{ij})_{kl}=\frac{1}{N}\cdot\frac{H_{ki}H_{lj}}{H_{li}H_{kj}}=(P^H_{kl})_{ij}$$

Finally, the last assertion is clear from the above formula of $P^H$.
\end{proof}

Let us compute now Hopf images. First, regarding the operation $H\to\overline{H}$, we have:

\begin{proposition}
The quantum groups associated to $H,\overline{H}$ are related by
$$G_{\overline{H}}=G_H^\sigma$$
where the Hopf algebra $C(G^\sigma)$ is $C(G)$ with comultiplication $\Sigma\Delta$, where $\Sigma$ is the flip.
\end{proposition}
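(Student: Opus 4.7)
The key input is Proposition 2.1, which gives $P^{\overline{H}}_{ij}=P^H_{ji}$, and hence $\pi_{\overline{H}}(u_{ij})=\pi_H(u_{ji})$: that is, $\pi_{\overline{H}}$ is obtained from $\pi_H$ by transposing index labels on the generators. My plan is to realize this transposition as a Hopf algebra map $C(S_N^+)\to C(G_H)^\sigma$ through which $\pi_{\overline{H}}$ factors, and then invoke the universal property of the Hopf image in both directions.

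Write $p_H:C(S_N^+)\to C(G_H)$ for the Hopf image quotient and $\rho_H:C(G_H)\to M_N(\mathbb C)$ for the associated representation, so that $\rho_H\circ p_H=\pi_H$. I define $\phi:C(S_N^+)\to C(G_H)^\sigma$ on generators by $\phi(u_{ij})=p_H(u_{ji})$. This extends to an algebra map because the transpose of a magic matrix is magic: the entries $p_H(u_{ji})$ are projections summing to $1$ on each row and column in $C(G_H)$, and multiplication in $C(G_H)^\sigma$ is the same as in $C(G_H)$. Compatibility with the comultiplication $\Delta_\sigma=\Sigma\Delta$ on the target is a direct check:
$$\Delta_\sigma(\phi(u_{ij}))=\Sigma\Bigl(\sum_k p_H(u_{jk})\otimes p_H(u_{ki})\Bigr)=\sum_k p_H(u_{ki})\otimes p_H(u_{jk})=(\phi\otimes\phi)\Delta(u_{ij}),$$
so $\phi$ is a Hopf algebra map.

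Next, $\rho_H\circ\phi(u_{ij})=P^H_{ji}=\pi_{\overline{H}}(u_{ij})$ shows that $\pi_{\overline{H}}$ factors through $\phi$. By minimality of the Hopf image $C(G_{\overline{H}})$, this yields a Hopf algebra surjection $C(G_H)^\sigma\to C(G_{\overline{H}})$. Applying the same construction with $H$ replaced by $\overline{H}$ (and using $\overline{\overline{H}}=H$) produces a surjection $C(G_{\overline{H}})^\sigma\to C(G_H)$, equivalently $C(G_{\overline{H}})\to C(G_H)^\sigma$. On the generators $\phi(u_{ij})=p_H(u_{ji})$ of $C(G_H)^\sigma$, each composition transposes indices twice and hence acts as the identity, so the two surjections are mutually inverse, giving $C(G_{\overline{H}})=C(G_H)^\sigma$.

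The main delicate point is the comultiplication check above: the flip $\Sigma$ on the target is forced by the fact that applying $\Delta$ to $u_{ji}$ produces $\sum_k u_{jk}\otimes u_{ki}$, which is precisely in the wrong tensor order compared to $(\phi\otimes\phi)\Delta(u_{ij})$. Once this is in place, everything else is a standard application of the universal property of the Hopf image, together with the symmetric reprise of the argument needed to upgrade one surjection into an isomorphism.
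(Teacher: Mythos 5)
Your proof is correct and follows essentially the same route as the paper: transpose the indices of the magic matrix to factor $\pi_{\overline{H}}$ through $C(G_H)^\sigma$, with the flip $\Sigma$ forced by the comultiplication check, and then invoke minimality of the Hopf image. You are in fact more explicit than the paper on the reverse direction (running the argument for $\overline{H}$ and composing the two surjections), which the paper leaves implicit.
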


\begin{proof}
Our claim is that, starting from a factorization for $H$ as in Definition 1.3 above, we can construct a factorization for $\overline{H}$, as follows:
\vskip-10mm
$$\xymatrix{u_{ij}\ar[rr]\ar[rd]&&P_{ij}\\&v_{ij}\in C(G)\ar[ur]&}\quad\begin{matrix}\\ \\ \\ \implies\end{matrix}\quad
\xymatrix{u_{ij}\ar[rr]\ar[rd]&&P_{ji}\\&v_{ji}\in C(G^\sigma)\ar[ur]&}$$

Indeed, observe first that since $v_{ij}\in C(G)$ are the coefficients of a corepresentation, then so are the elements $v_{ji}\in C(G^\sigma)$. Thus, in order to produce the factorization on the right, it is enough to take the diagram on the left, and compose at top left with the canonical map $C(S_N^+)\to C(S_N^{+\sigma})$ given by $u_{ij}\to u_{ji}$, and we are done.
\end{proof}

Let us investigate now the operation $H\to H^t$. We use the notion of dual of a finite quantum group, see e.g. \cite{wo1}. The result here is as follows: 

\begin{theorem}
The quantum groups associated to $H,H^t$ are related by usual duality,
$$G_{H^t}=\widehat{G}_H$$
provided that the quantum group $G_H$ is finite. 
\end{theorem}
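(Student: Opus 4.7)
The plan is to combine Proposition 2.1, which yields the key combinatorial identity $(P^{H^t}_{ij})_{kl} = (P^H_{kl})_{ij}$, with Pontryagin duality for the finite quantum group $G_H$. The proposition says that passing $H \to H^t$ swaps the ``outer'' pair of indices $(i,j)$ with the ``inner'' pair $(k,l)$ of the magic matrix; on the quantum-group side, the passage $G \to \widehat{G}$ in the finite case swaps multiplication with comultiplication, which at the level of matrix coefficients of the fundamental corepresentation also swaps two kinds of indices. The strategy is to match these two index-swaps.

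Concretely, since $G_H$ is finite, $C(\widehat{G}_H) := C(G_H)^\ast$ is a finite-dimensional Hopf $C^\ast$-algebra, and the $\ast$-algebra morphism $\rho : C(G_H) \to M_N(\mathbb{C})$, $v_{ij} \mapsto P^H_{ij}$, can be repackaged as an element $\tilde\rho \in C(\widehat{G}_H) \otimes M_N(\mathbb{C})$ whose $(k,l)$-coefficient $\tilde\rho_{kl}$ is the linear functional on $C(G_H)$ sending $v_{ij}$ to $(P^H_{ij})_{kl}$. The first step is to verify that this $\tilde\rho$ is itself a magic corepresentation on $\mathbb{C}^N$ of the dual. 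This produces a surjection $C(S_N^+) \to C(\widehat{G}_H)$ sending $u_{kl} \mapsto \tilde\rho_{kl}$, i.e.\ an embedding $\widehat{G}_H \subset S_N^+$; the explicit formula $(P^H_{ij})_{kl} = \frac{1}{N}\,H_{ik}H_{jl}/(H_{il}H_{jk})$ together with Proposition 2.1 should then show that the induced representation $C(\widehat{G}_H) \to M_N(\mathbb{C})$ sends $\tilde\rho_{kl}$ to $P^{H^t}_{kl}$, giving a Hopf-image factorization for $H^t$ through $\widehat{G}_H$ and thus $G_{H^t} \subset \widehat{G}_H$. The reverse inclusion follows by applying the same construction with $H^t$ in place of $H$ and using $(H^t)^t = H$ together with $\widehat{\widehat{G}} = G$, which is legitimate since $\widehat{G}_H$ is again finite.

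The technical heart of the argument, and the main obstacle, lies in the first step: checking that $\tilde\rho$ really is a magic corepresentation of $\widehat{G}_H$. This amounts to proving that the linear functionals $\tilde\rho_{kl}$ are self-adjoint idempotents in $C(\widehat{G}_H)$ and sum to the counit in each row and column — properties that translate, via the duality pairing, into algebraic identities for the block matrix $P^H \in M_N(\mathbb{C}) \otimes M_N(\mathbb{C})$ from the ``other side'' of the tensor product. These identities are exactly the magic-matrix conditions on $P^H$ transported under the flip, and the auxiliary symmetry $(P^H_{ij})_{kl} = (P^H_{ji})_{lk}$ from Proposition 2.1 is the key compatibility that makes the transport work. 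The finiteness hypothesis is essential here: without it $C(\widehat{G}_H)$ is not a $C^\ast$-algebra of functions but a discrete-quantum-group algebra, so the surjection $C(S_N^+) \to C(\widehat{G}_H)$ is unavailable and the clean equality $G_{H^t} = \widehat{G}_H$ genuinely fails, as discussed in \cite{ban}.
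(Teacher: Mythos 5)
Your proposal follows essentially the same route as the paper: your functionals $\tilde\rho_{kl}$ are exactly the paper's $w_{kl}(x)=\rho(x)_{kl}$, the commutation on generators is checked via the index swap $(P^{H^t}_{ab})_{kl}=(P^H_{kl})_{ab}$ of Proposition 2.1, and your explicit handling of the reverse inclusion via $(H^t)^t=H$ and biduality makes precise a point the paper leaves implicit. The only caveat is that the step you flag as the ``technical heart'' (idempotency, self-adjointness and sum-one for the $\tilde\rho_{kl}$ in the convolution algebra) is not quite a formal transport of the magic conditions on $P^H$ under the flip: the paper's verification is a genuine computation on words $v_{i_1j_1}\ldots v_{i_pj_p}$ that repeatedly uses the orthogonality of the rows \emph{and} of the columns of $H$ (sums of the form $\frac{1}{N}\sum_k H_{ka}\overline{H}_{k\alpha}=\delta_{a\alpha}$), so this part would still need to be carried out.
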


\begin{proof}
Our claim is that, starting from a factorization for $H$ as in Definition 1.3 above, we can construct a factorization for $H^t$, as follows:
\vskip-10mm
$$\xymatrix{C(S_N^+)\ar[rr]^{\pi_H}\ar[rd]&&M_N(\mathbb C)\\&C(G)\ar[ur]_\rho&}\quad\begin{matrix}\\ \\ \\ \implies\end{matrix}\quad
\xymatrix{C(S_N^+)\ar[rr]^{\pi_{H^t}}\ar[rd]&&M_N(\mathbb C)\\&C(G)^*\ar[ur]_\eta&}$$

More precisely, having a factorization as the one on the left, let us set:
\begin{eqnarray*}
\eta(\varphi)&=&(\varphi(v_{kl}))_{kl}\\
w_{kl}(x)&=&(\rho(x))_{kl}
\end{eqnarray*}

Our claim is that $\eta$ is a representation, $w$ is a corepresentation, and the factorization on the right holds indeed. Let us first check that $\eta$ is a representation:
\begin{eqnarray*}
\eta(\varphi\psi)&=&(\phi\psi(v_{kl}))_{kl}=((\varphi\otimes\psi)\Delta(v_{kl}))_{kl}=(\sum_a\varphi(v_{ka})\psi(v_{al}))_{kl}=\eta(\varphi)\eta(\psi)\\
\eta(\varepsilon)&=&(\varepsilon(v_{kl}))_{kl}=(\delta_{kl})_{kl}=1\\
\eta(\varphi^*)&=&(\varphi^*(v_{kl}))_{kl}=(\overline{\varphi(S(v_{kl}^*))})_{kl}=(\overline{\varphi(v_{lk})})_{kl}=\eta(\varphi)^*
\end{eqnarray*}

Let us check now the fact that $w$ is a corepresentation:
\begin{eqnarray*}
(\Delta w_{kl})(x\otimes y)
&=&w_{kl}(xy)=\rho(xy)_{kl}=\sum_i\rho(x)_{ki}\rho(y)_{il}\\
&=&\sum_iw_{ki}(x)w_{il}(y)=(\sum_iw_{ki}\otimes w_{il})(x\otimes y)\\
\varepsilon(w_{kl})&=&w_{kl}(1)=1_{kl}=\delta_{kl}
\end{eqnarray*}

We check now the fact that the above diagram commutes on the generators $u_{ij}$:
$$\eta(w_{ab})=(w_{ab}(v_{kl}))_{kl}=(\rho(v_{kl})_{ab})_{kl}=((P^H_{kl})_{ab})_{kl}=((P^{H^t}_{ab})_{kl})_{kl}=P^{H^t}_{ab}$$

It remains to prove that $w$ is magic. We have the following formula:
\begin{eqnarray*}
w_{a_0a_p}(v_{i_1j_1}\ldots v_{i_pj_p})
&=&(\Delta^{(p-1)}w_{a_0a_p})(v_{i_1j_1}\otimes\ldots\otimes v_{i_pj_p})\\
&=&\sum_{a_1\ldots a_{p-1}}w_{a_0a_1}(v_{i_1j_1})\ldots w_{a_{p-1}a_p}(v_{i_pj_p})\\
&=&\frac{1}{N^p}\sum_{a_1\ldots a_{p-1}}\frac{H_{i_1a_0}H_{j_1a_1}}{H_{i_1a_1}H_{j_1a_0}}\ldots\ldots\frac{H_{i_pa_{p-1}}H_{j_pa_p}}{H_{i_pa_p}H_{j_pa_{p-1}}}
\end{eqnarray*}

In order to check that each $w_{ab}$ is an idempotent, observe that we have:
\begin{eqnarray*}
w_{a_0a_p}^2(v_{i_1j_1}\ldots v_{i_pj_p})
&=&(w_{a_0a_p}\otimes w_{a_0a_p})\sum_{k_1\ldots k_p}v_{i_1k_1}\ldots v_{i_pk_p}\otimes v_{k_1j_1}\ldots v_{k_pj_p}\\
&=&\frac{1}{N^{2p}}\sum_{k_1\ldots k_p}\sum_{a_1\ldots a_{p-1}}\sum_{\alpha_1\ldots \alpha_{p-1}}\frac{H_{i_1a_0}H_{k_1a_1}}{H_{i_1a_1}H_{k_1a_0}}\ldots\ldots\frac{H_{i_pa_{p-1}}H_{k_pa_p}}{H_{i_pa_p}H_{k_pa_{p-1}}}\\
&&\frac{H_{k_1a_0}H_{j_1\alpha_1}}{H_{k_1\alpha_1}H_{j_1a_0}}\ldots\ldots\frac{H_{k_p\alpha_{p-1}}H_{j_pa_p}}{H_{k_pa_p}H_{j_p\alpha_{p-1}}}
\end{eqnarray*}

The point now is that when summing over $k_1$ we obtain $N\delta_{a_1\alpha_1}$, then when summing over $k_2$ we obtain $N\delta_{a_2\alpha_2}$, and so on up to summing over $k_{p-1}$, where we obtain $N\delta_{a_{p-1}\alpha_{p-1}}$. Thus, after performing all these summations, what we are left with is:
\begin{eqnarray*}
w_{a_0a_p}^2(v_{i_1j_1}\ldots v_{i_pj_p})
&=&\frac{1}{N^{p+1}}\sum_{k_p}\sum_{a_1\ldots a_{p-1}}\frac{H_{i_1a_0}H_{j_1a_1}}{H_{i_1a_1}H_{j_1a_0}}\ldots\ldots\frac{H_{i_pa_{p-1}}H_{k_pa_p}}{H_{i_pa_p}H_{k_pa_{p-1}}}\cdot\frac{H_{k_pa_{p-1}}H_{j_pa_p}}{H_{k_pa_p}H_{j_pa_{p-1}}}\\
&=&\frac{1}{N^{p+1}}\sum_{k_p}\sum_{a_1\ldots a_{p-1}}\frac{H_{i_1a_0}H_{j_1a_1}}{H_{i_1a_1}H_{j_1a_0}}\ldots\ldots\frac{H_{i_pa_{p-1}}H_{j_pa_p}}{H_{i_pa_p}H_{j_pa_{p-1}}}\\
&=&\frac{1}{N^p}\sum_{a_1\ldots a_{p-1}}\frac{H_{i_1a_0}H_{j_1a_1}}{H_{i_1a_1}H_{j_1a_0}}\ldots\ldots\frac{H_{i_pa_{p-1}}H_{j_pa_p}}{H_{i_pa_p}H_{j_pa_{p-1}}}\\
&=&w_{a_0a_p}(v_{i_1j_1}\ldots v_{i_pj_p})
\end{eqnarray*}

Regarding now the involutivity, the check here is simply:
\begin{eqnarray*}
w_{a_0a_p}^*(v_{i_1j_1}\ldots v_{i_pj_p})
&=&\overline{w_{a_0a_p}(S(v_{i_pj_p}\ldots v_{i_1j_1}))}\\
&=&\overline{w_{a_0a_p}(v_{j_1i_1}\ldots v_{j_pi_p})}\\
&=&w_{a_0a_p}^*(v_{i_1j_1}\ldots v_{i_pj_p})
\end{eqnarray*}

Finally, for checking the first ``sum 1'' condition, observe that we have:
$$\sum_{a_0}w_{a_0a_p}(v_{i_1j_1}\ldots v_{i_pj_p})
=\frac{1}{N^p}\sum_{a_0\ldots a_{p-1}}\frac{H_{i_1a_0}H_{j_1a_1}}{H_{i_1a_1}H_{j_1a_0}}\ldots\ldots\frac{H_{i_pa_{p-1}}H_{j_pa_p}}{H_{i_pa_p}H_{j_pa_{p-1}}}$$

The point now is that when summing over $a_0$ we obtain $N\delta_{i_1j_1}$, then when summing over $a_1$ we obtain $N\delta_{i_2j_2}$, and so on up to summing over $a_{p-1}$, where we obtain $N\delta_{i_pj_p}$. Thus, after performing all these summations, what we are left with is:
$$\sum_{a_0}w_{a_0a_p}(v_{i_1j_1}\ldots v_{i_pj_p})=\delta_{i_1j_1}\ldots\delta_{i_pj_p}=\varepsilon(v_{i_1j_1}\ldots v_{i_pj_p})$$

The proof of the other ``sum 1'' condition is similar, and this finishes the proof.
\end{proof}

\section{The truncation procedure}

Let us go back now to the factorization in Definition 1.3. Regarding the Haar functional of the quantum group $G$, we have the following key result, from \cite{bfs}:

\begin{proposition}
We have the Ces\`aro limiting formula
$$\int_G=\lim_{k\to\infty}\frac{1}{k}\sum_{r=1}^k\int_G^r$$
where the functionals at right are by definition given by $\int_G^r=(tr\circ\rho)^{*r}$.
\end{proposition}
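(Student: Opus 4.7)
The key observation is that $\varphi=tr\circ\rho$ is a state on $C(G)$, and $\int_G^r=\varphi^{*r}$ is simply the $r$-th convolution power. So the claimed formula is an instance of a general Cesàro ergodic theorem for states on compact quantum groups, due to Woronowicz \cite{wo1}. The plan is therefore to identify $\varphi$ correctly, invoke Woronowicz's convergence result, and then verify the hypothesis that forces the limit to be the full Haar state $\int_G$.

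First I would check that $\varphi=tr\circ\rho$ is indeed a state: positivity and normalization are immediate since $tr$ is a state on $M_N(\mathbb{C})$ and $\rho$ is a unital $*$-homomorphism. Next, the convolution structure: using $\Delta(u_{ij})=\sum_k u_{ik}\otimes u_{kj}$ and the fact that $\rho$ is a Hopf algebra morphism, one computes
$$\varphi^{*r}(v_{i_1j_1}\cdots v_{i_pj_p})=(tr\circ\rho)^{\otimes r}\Delta^{(r-1)}(v_{i_1j_1}\cdots v_{i_pj_p}),$$
which matches the definition of $\int_G^r$ in the statement. So the desired assertion is exactly the Cesàro convergence of $\frac{1}{k}\sum_{r=1}^k\varphi^{*r}$ to $\int_G$.

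Now I invoke the Woronowicz Cesàro theorem: for any state $\varphi$ on the algebra of a compact quantum group $G$, the sequence $\frac{1}{k}\sum_{r=1}^k\varphi^{*r}$ converges (weak-$*$) to the Haar state of the minimal quantum subgroup of $G$ through which $\varphi$ factorizes. In our setting, what we need to check is that no proper Hopf quotient of $C(G)$ absorbs $\varphi$, so that this minimal subgroup is all of $G$. I expect this to be the main (and essentially only) obstacle, but it is resolved by the very definition of the Hopf image: $G\subset S_N^+$ is by construction the \emph{minimal} intermediate quantum group through which $\pi$, and hence $\rho$, factorizes, so $\rho$ is ``faithful'' in the Hopf-algebra sense. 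Since $tr$ is a faithful trace on $M_N(\mathbb{C})$, no proper Hopf quotient of $C(G)$ can kill $\varphi$, and therefore the Woronowicz limit is $\int_G$ itself.

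Putting these pieces together yields the stated Cesàro formula. For the details of the underlying ergodic theorem and its precise formulation in the non-faithful case one can refer to \cite{wo1} and to the treatment in \cite{bfs}, from where this proposition is quoted; the novelty here is purely the compact notation $\int_G^r=(tr\circ\rho)^{*r}$, which will be the main computational tool in the sections that follow.
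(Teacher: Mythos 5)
The paper does not actually prove this proposition --- it is quoted verbatim from \cite{bfs}, so the ``paper's proof'' is just a citation, and your sketch is a reasonable outline of the argument given there: existence of the Ces\`aro limit of the convolution powers, plus identification of the limit with the Haar state via the minimality built into the definition of the Hopf image and the faithfulness of $tr$ on $M_N(\mathbb C)$. One caution: your intermediate claim that for \emph{any} state $\varphi$ the Ces\`aro means $\frac{1}{k}\sum_{r=1}^k\varphi^{*r}$ converge to the Haar state of ``the minimal quantum subgroup through which $\varphi$ factorizes'' overstates what is available off the shelf. Woronowicz's theorem in \cite{wo1} covers the faithful case; in general the limit is only an idempotent state, and by \cite{fsk} an idempotent state need not be the Haar state of any quantum subgroup. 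The actual content of \cite{bfs} is precisely that for states of the form $tr\circ\rho$, with $\rho$ the factorized representation into $M_N(\mathbb C)$ and $C(G)$ the Hopf image, the limiting idempotent state is the Haar state of $G$; so the step you present as an application of a general ergodic theorem is in fact the main point being cited, not a routine verification. With that attribution corrected, your argument is the intended one.
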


Regarding the functionals $\int_G^r$, their evaluation is a linear algebra problem. Several formulations of the problem were proposed in \cite{bfs}, and we will use here the following formula, which appears there, but in a somewhat technical form:

\begin{proposition}
The functionals $\int_G^r=(tr\circ\rho)^{*r}$ are given by
$$\int_G^ru_{a_1b_1}\ldots u_{a_pb_p}=(T_p^r)_{a_1\ldots a_p,b_1\ldots b_p}$$
where $(T_p)_{i_1\ldots i_p,j_1\ldots j_p}=tr(P_{i_1j_1}\ldots P_{i_pj_p})$, with $P_{ij}=Proj(H_i/H_j)$.
\end{proposition}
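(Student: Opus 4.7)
The plan is to unpack the definition $\int_G^r = (tr\circ\rho)^{*r}$ and match it against matrix multiplication of $T_p$. The computation rests on two elementary ingredients: first, the action of $\rho$ on a product of generators $u_{a_1b_1}\ldots u_{a_pb_p}$ is by definition $P_{a_1b_1}\ldots P_{a_pb_p}$, so the $r=1$ case is immediate:
$$\int_G^1 u_{a_1b_1}\ldots u_{a_pb_p} = tr(P_{a_1b_1}\ldots P_{a_pb_p}) = (T_p)_{a_1\ldots a_p, b_1\ldots b_p}.$$

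Second, I would use multiplicativity of $\Delta$ together with the basic formula $\Delta(u_{ij})=\sum_k u_{ik}\otimes u_{kj}$ to get the ``splitting'' identity
$$\Delta(u_{a_1b_1}\ldots u_{a_pb_p}) = \sum_{c_1,\ldots,c_p} u_{a_1c_1}\ldots u_{a_pc_p}\otimes u_{c_1b_1}\ldots u_{c_pb_p}.$$
Combined with the definition $\psi*\phi = (\psi\otimes\phi)\Delta$, this yields at once
$$((tr\circ\rho)*(tr\circ\rho))(u_{a_1b_1}\ldots u_{a_pb_p}) = \sum_{c_1,\ldots,c_p}(T_p)_{a_1\ldots a_p, c_1\ldots c_p}(T_p)_{c_1\ldots c_p, b_1\ldots b_p} = (T_p^2)_{a_1\ldots a_p, b_1\ldots b_p},$$
which is the case $r=2$.

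The general case follows by induction on $r$: iterating the splitting formula above gives
$$\Delta^{(r-1)}(u_{a_1b_1}\ldots u_{a_pb_p}) = \sum u_{a_1c_1^{(1)}}\ldots u_{a_pc_p^{(1)}}\otimes u_{c_1^{(1)}c_1^{(2)}}\ldots u_{c_p^{(1)}c_p^{(2)}}\otimes\ldots\otimes u_{c_1^{(r-1)}b_1}\ldots u_{c_p^{(r-1)}b_p},$$
and applying $(tr\circ\rho)^{\otimes r}$ to each tensorand turns each factor into a $T_p$-matrix entry, so the total sum is exactly the $(r-1)$-fold matrix product $T_p\cdot T_p\cdots T_p = T_p^r$ evaluated at the multi-index $(a_1\ldots a_p, b_1\ldots b_p)$.

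There is no real obstacle; the only thing to be careful about is notational, namely that $\int_G^r$ is a functional on $C(G)$ while $u_{ij}$ denotes the generators of $C(S_N^+)$. The two points of view are reconciled via the canonical quotient $C(S_N^+)\to C(G)$ sending $u_{ij}\mapsto v_{ij}$: since the statement only involves values of functionals composed with this quotient, and since $\rho(v_{ij}) = P_{ij}$ by the defining factorization, the computation proceeds identically whether one works upstairs or downstairs.
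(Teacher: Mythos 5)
Your proposal is correct and follows essentially the same route as the paper: expand the iterated comultiplication of a product of generators into a sum over intermediate indices, apply $tr\circ\rho$ factorwise, and recognize the resulting sum as the entry of the matrix power $T_p^r$. The only cosmetic difference is that you organize the computation as an induction on $r$ (after checking $r=1,2$), whereas the paper writes the general $r$-fold expansion directly; the remark about passing through the quotient $C(S_N^+)\to C(G)$ is a sensible clarification but does not change the argument.
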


\begin{proof}
With $a_s=i_s^0$ and $b_s=i_s^{r+1}$, we have the following computation:
\begin{eqnarray*}
\int_G^ru_{a_1b_1}\ldots u_{a_pb_p}
&=&(tr\circ\rho)^{\otimes r}\Delta^{(r)}(u_{i_1^0i_1^{r+1}}\ldots u_{i_p^0i_p^{r+1}})\\
&=&(tr\circ\rho)^{\otimes r}\sum_{i_1^1\ldots i_p^r}u_{i_1^0i_1^1}\ldots u_{i_p^0i_p^1}\otimes\ldots\ldots\otimes u_{i_1^ru_1^{r+1}}\ldots u_{i_p^ri_p^{r+1}}\\
&=&tr^{\otimes r}\sum_{i_1^1\ldots i_p^r}P_{i_1^0i_1^1}\ldots P_{i_p^0i_p^1}\otimes\ldots\ldots\otimes P_{i_1^ri_1^{r+1}}\ldots P_{i_p^ri_p^{r+1}}
\end{eqnarray*}

On the other hand, we have as well the following computation:
\begin{eqnarray*}
(T_p^r)_{a_1\ldots a_p,b_1\ldots b_p}
&=&\sum_{i_1^1\ldots i_p^r}(T_p)_{i_1^0\ldots i_p^0,i_1^1\ldots i_p^1}\ldots\ldots (T_p)_{i_1^r\ldots i_p^r,i_1^{r+1}\ldots i_p^{r+1}}\\
&=&\sum_{i_1^1\ldots i_p^r}tr(P_{i_1^0i_1^1}\ldots P_{i_p^0i_p^1})\ldots\ldots tr(P_{i_1^ri_1^{r+1}}\ldots P_{i_p^ri_p^{r+1}})\\
&=&tr^{\otimes r}\sum_{i_1^1\ldots i_p^r}P_{i_1^0i_1^1}\ldots P_{i_p^0i_p^1}\otimes\ldots\ldots\otimes P_{i_1^ri_1^{r+1}}\ldots P_{i_p^ri_p^{r+1}}
\end{eqnarray*}

Thus we have obtained the formula in the statement, and we are done.
\end{proof}

We can now define the truncations of $\mu$, as follows:

\begin{proposition}
Let $\mu^r$ be the law of $\chi$ with respect to $\int_G^r=(tr\circ\rho)^{*r}$.
\begin{enumerate}
\item $\mu^r$ is a probability measure on $[0,N]$.

\item We have the formula $\mu=\lim_{k\to\infty}\frac{1}{k}\sum_{r=1}^k\mu^r$.

\item The moments of $\mu^r$ are the numbers $c_p^r=Tr(T_p^r)$.
\end{enumerate}
\end{proposition}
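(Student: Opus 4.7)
The plan is to handle the three assertions in turn, starting with (3), then (1), then (2), since the moment formula from (3) is what feeds into the convergence statement in (2).

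For (3), I would simply expand $\chi^p = (\sum_i u_{ii})^p = \sum_{a_1,\ldots,a_p}u_{a_1a_1}u_{a_2a_2}\cdots u_{a_pa_p}$ and apply Proposition 3.2 with $b_s=a_s$ for each index. This gives
$$\int_G^r\chi^p = \sum_{a_1,\ldots,a_p}(T_p^r)_{a_1\ldots a_p,\,a_1\ldots a_p} = Tr(T_p^r),$$
which is exactly the claimed value of $c_p^r$.

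For (1), the task is to verify that $\int_G^r$ is a state on $C(G)$, after which the support condition is automatic. The functional $tr\circ\rho$ is a state, being the composition of the unital $*$-homomorphism $\rho\colon C(G)\to M_N(\mathbb C)$ with the normalized trace. The convolution of two states on a compact quantum group algebra is again a state: $(\phi\otimes\psi)\Delta$ sends $1$ to $1$ because $\Delta$ is unital, and preserves positivity because $\Delta$ is a $*$-homomorphism and $\phi\otimes\psi$ is a state on the tensor product. Iterating, $\int_G^r=(tr\circ\rho)^{*r}$ is a state. Since $\chi=\sum_i u_{ii}$ is a sum of $N$ projections coming from the magic matrix $u$, we have $0\leq\chi\leq N$ in $C(G)$, and hence the law $\mu^r$ of $\chi$ with respect to any state on $C(G)$ is a probability measure supported in $\mathrm{spec}(\chi)\subset[0,N]$.

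For (2), I would apply Proposition 3.1 to the particular element $\varphi=\chi^p$, for each $p\geq 0$. This yields
$$\int_0^N x^p\,d\mu(x) = \lim_{k\to\infty}\frac{1}{k}\sum_{r=1}^k\int_0^N x^p\,d\mu^r(x),$$
i.e.\ convergence of every moment of the Ces\`aro average $\frac{1}{k}\sum_{r=1}^k\mu^r$ to the corresponding moment of $\mu$. Since all measures in sight are supported on the same compact interval $[0,N]$, moment convergence upgrades to weak convergence by Stone--Weierstrass (polynomials are dense in $C[0,N]$ and the Ces\`aro averages are uniformly probability measures), which is the meaning of the limit in the statement. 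There is no serious obstacle; the only care needed is at the interface between the abstract Ces\`aro statement from Proposition 3.1, which lives at the level of functionals on $C(G)$, and its interpretation as weak convergence of spectral measures on $[0,N]$, and this translation is entirely standard.
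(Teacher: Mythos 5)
Your proof is correct and follows essentially the same route as the paper: (3) by setting $a_s=b_s$ in Proposition 3.2 and summing, (1) by noting that $(tr\circ\rho)^{*r}$ is a positive unital functional and that $0\leq\chi\leq N$, and (2) by specializing Proposition 3.1 to powers of $\chi$. The paper states these steps more tersely; your added details (convolution of states is a state, moment convergence on a fixed compact interval implies weak convergence) are the standard justifications it leaves implicit.
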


\begin{proof}
(1) The fact that $\mu^r$ is indeed a probability measure follows from the fact that the linear form $(tr\circ\rho)^{*r}:C(G)\to\mathbb C$ is a positive unital trace, and the assertion on the support comes from the fact that the main character $\chi$ is a sum of $N$ projections.

(2) This follows from Proposition 3.1, i.e. from the main result in \cite{bfs}.

(3) This follows from Proposition 3.2 above, by summing over $a_i=b_i$.
\end{proof}

Let us recall now that associated to a complex Hadamard matrix $H\in M_N(\mathbb C)$ is its profile matrix, given by:
$$Q_{ab,cd}=\frac{1}{N}\left\langle\frac{H_a}{H_b},\frac{H_c}{H_d}\right\rangle=\frac{1}{N}\sum_i\frac{H_{ia}H_{id}}{H_{ib}H_{ic}}$$

With this notation, we have the following result:

\begin{proposition}
The measures $\mu^r$ have the following properties:
\begin{enumerate}
\item $\mu^0=\delta_N$.

\item $\mu^1=(1-\frac{1}{N})\delta_0+\frac{1}{N}\delta_N$.

\item $\mu^2=law(S)$, where $S_{ab,cd}=|Q_{ab,cd}|^2$.

\item For a Fourier matrix $F_G$ we have $\mu^1=\mu^2=\ldots=\mu$.
\end{enumerate}
\end{proposition}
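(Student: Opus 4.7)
My plan is to compute the moments $c_p^r$ using Proposition 3.3(3) for parts (1)--(3), and to argue at the level of functionals for part (4). For part (1), at $r=0$ the matrix $T_p^0$ is the $N^p\times N^p$ identity, so $c_p^0=N^p$, matching $\delta_N$; equivalently, $\int_G^0$ is the empty convolution $\varepsilon$ with $\varepsilon(\chi)=N$. For part (2), I compute $c_p^1=Tr(T_p)=\sum_{i_1,\ldots,i_p}tr(P_{i_1i_1}\cdots P_{i_pi_p})$; the key observation is that $H_i/H_i$ is the all-ones vector independently of $i$, so every $P_{ii}$ equals a single fixed rank-one projection $P$. Then $P_{i_1i_1}\cdots P_{i_pi_p}=P$ with $tr(P)=1/N$, yielding $c_p^1=N^{p-1}$, which are precisely the moments of $(1-\frac{1}{N})\delta_0+\frac{1}{N}\delta_N$.

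For part (3), I will write $c_p^2=\sum_{I,J}(T_p)_{I,J}(T_p)_{J,I}$ and invoke the identity $P_{ji}=\overline{P_{ij}}$ (the last assertion of Proposition 2.1) to get $(T_p)_{J,I}=\overline{(T_p)_{I,J}}$, hence $c_p^2=\sum_{I,J}|tr(P_{i_1j_1}\cdots P_{i_pj_p})|^2$. Writing $P_{ij}=\xi_{ij}\xi_{ij}^*$ with the unit vectors $\xi_{ij}=N^{-1/2}H_i/H_j$ from Theorem A and telescoping the product gives
$$tr(P_{i_1j_1}\cdots P_{i_pj_p})=\frac{1}{N}\prod_{k=1}^p\langle\xi_{i_kj_k},\xi_{i_{k+1}j_{k+1}}\rangle,$$
with indices read cyclically mod $p$. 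One then checks that $|\langle\xi_{ab},\xi_{cd}\rangle|^2=S_{ab,cd}=|Q_{ab,cd}|^2$ (modulo the convention-swap between the row-based $\xi$ and the column-based $Q$, which is invisible after taking modulus). The resulting cyclic sum collapses to $Tr(S^p)/N^2$, which is the $p$-th moment of the spectral law of $S$ with respect to the normalized trace on $M_{N^2}(\mathbb C)$.

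For part (4), in the Fourier case $H=F_G$ the associated quantum group is $G$ itself, so $C(G)$ is commutative of dimension $|G|=N$, and $\rho:C(G)\to M_N(\mathbb C)$ sends each standard magic entry $v_{ij}$ to a rank-one projection of normalized trace $1/N$. Since the Haar integral is uniform on $G$ and the $v_{ij}$ are indicator functions of single group elements (namely $i-j$ under the translation action), we have $\int_G(v_{ij})=1/N$ as well; both $tr\circ\rho$ and $\int_G$ are traces agreeing on the generators, so they coincide on all of $C(G)$, giving $\int_G^1=\int_G$. The Haar state being convolution-idempotent ($h*h=h$, since $(h\otimes id)\Delta=h(\cdot)1$), we have $\int_G^r=\int_G$ for every $r\geq1$, and therefore $\mu^r=\mu$. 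The main obstacle will be the index bookkeeping in (3): one must carefully track the transposition between the row-based definition of $\xi_{ij}$ and the column-based definition of $Q_{ab,cd}$, and verify that the discrepancy washes out after taking $|\cdot|^2$. The other three parts reduce to short computations using the moment formula of Proposition 3.3.
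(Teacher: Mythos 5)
Your proof is correct. Parts (1) and (2) coincide with the paper's own argument (the observation that every $P_{ii}$ is the flat projection $J$ onto the all-ones vector, giving $c_p^1=N^p\cdot tr(J)=N^{p-1}$, is exactly what is done there). Where you add something is in (3) and (4), which the paper dispatches in one line each. For (3) the paper only says the identity ``can be checked directly'' and otherwise derives it as the $r=2$ specialization of Theorem 3.5; your direct check --- $T_p$ is Hermitian since $(T_p)_{JI}=tr(P_{j_1i_1}\cdots P_{j_pi_p})=tr\bigl((P_{i_pj_p}\cdots P_{i_1j_1})^t\bigr)=\overline{(T_p)_{IJ}}$, hence $c_p^2=\sum_{I,J}|(T_p)_{IJ}|^2$, followed by the rank-one telescoping $(T_p)_{IJ}=\frac{1}{N}\prod_k\langle\xi_{i_kj_k},\xi_{i_{k+1}j_{k+1}}\rangle$ --- is a clean, self-contained alternative. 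One remark: the ``convention swap'' you list as the main obstacle is actually a non-issue, because the paper \emph{defines} $Q_{ab,cd}$ as $\frac{1}{N}\langle H_a/H_b,H_c/H_d\rangle$, i.e.\ precisely as the Gram matrix of your unit vectors $\xi_{ab}$; so $\langle\xi_{ab},\xi_{cd}\rangle=Q_{ab,cd}$ on the nose and no modulus argument is needed. (Be aware that if the row/column conventions did genuinely disagree, taking $|\cdot|^2$ would \emph{not} reconcile $|Q^H_{ab,cd}|^2$ with $|Q^{H^t}_{ab,cd}|^2$ entrywise, so your proposed fix would not work; fortunately the issue does not arise.) For (4) the paper appeals only to the faithfulness of $\rho$ in the Fourier case; your route --- the generators $v_{ij}=\delta_{i-j}$ linearly span the $N$-dimensional commutative algebra $C(G)$, both $tr\circ\rho$ and the Haar state take the value $1/N$ on each of them, hence $\int_G^1=\int_G$, and then convolution-idempotence of the Haar state gives $\int_G^r=\int_G$ for all $r\geq1$ --- is more explicit and arguably more convincing. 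The only loose phrase is that ``traces agreeing on the generators coincide on all of $C(G)$'', which is false in general; here it is rescued by the fact that the $v_{ij}$ already linearly span the algebra, a point worth stating explicitly.
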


\begin{proof}
We use the formula $c_p^r=Tr(T_p^r)$ from Proposition 3.3 (3) above.

(1) At $r=0$ we have $c_p^0=Tr(T_p^0)=Tr(Id_{N^p})=N^p$, so $\mu^0=\delta_N$.

(2) At $r=1$, if we denote by $J$ the flat matrix $(1/N)_{ij}$, we have indeed: 
$$c_p^1=Tr(T_p)=\sum_{i_1\ldots i_p}tr(P_{i_1i_1}\ldots P_{i_pi_p})=\sum_{i_1\ldots i_p}tr(J^p)=\sum_{i_1\ldots i_p}tr(J)=N^{p-1}$$

(3) This can be checked directly, and is also a consequence of Theorem 3.5 below.

(4) For a Fourier matrix the representation $\rho$ producing the factorization in Definition 1.3 is well-known to be faithful, and this gives the result.
\end{proof}

In the general case now, we have the following result:

\begin{theorem}
We have $\mu^r=law(X)$, where
$$X_{a_1\ldots a_r,b_1\ldots b_r}=Q_{a_1b_1,a_2b_2}\ldots Q_{a_rb_r,a_1b_1}$$
where $Q$ denotes as usual the profile matrix.
\end{theorem}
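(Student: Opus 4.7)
The plan is to reduce the statement to an equality of moments. By Proposition 3.3(3), the $p$-th moment of $\mu^r$ is $Tr(T_p^r)$, while the $p$-th moment of $\mathrm{law}(X)$ with respect to the normalized trace on $M_{N^r}(\mathbb C)$ is $\frac{1}{N^r}Tr(X^p)$. Both measures are supported in the bounded interval $[0,N]$, so it suffices to verify
$$Tr(X^p)\;=\;N^r\cdot Tr(T_p^r)\qquad\text{for every }p\geq 1.$$

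First I would derive a closed formula for $(T_p)_{i,j}=tr(P_{i_1j_1}\cdots P_{i_pj_p})$ in terms of $Q$. Writing $P_{ab}$ as the rank-one projection onto the unit vector $\eta_{ab}=\frac{1}{\sqrt{N}}H_a/H_b$ and applying the cyclic property of the trace, a product of $p$ rank-one projections collapses to a cyclic product of consecutive inner products:
$$(T_p)_{i,j}\;=\;\frac{1}{N}\prod_{s=1}^p\langle\eta_{i_sj_s},\eta_{i_{s+1}j_{s+1}}\rangle\;=\;\frac{1}{N}\prod_{s=1}^p Q_{i_sj_s,\,i_{s+1}j_{s+1}},$$
with $s+1$ taken cyclically modulo $p$, the last equality being the very definition of the profile matrix. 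An equivalent derivation is to expand the cyclic trace directly and perform the resulting column-index sums one at a time.

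Unfolding both traces now puts them on the same footing. One obtains the parallel expressions
$$N^r\,Tr(T_p^r)\;=\;\sum\prod_{(s,t)\in\mathbb Z_r\times\mathbb Z_p}Q_{i_t^s i_t^{s+1},\,i_{t+1}^s i_{t+1}^{s+1}},\qquad Tr(X^p)\;=\;\sum\prod_{(s,t)\in\mathbb Z_p\times\mathbb Z_r}Q_{c_t^s c_t^{s+1},\,c_{t+1}^s c_{t+1}^{s+1}},$$
where each sum runs over all families of indices on the indicated discrete torus. In both cases the $Q$-factor attached to the unit cell at $(s,t)$ depends on the same symmetric pattern of four corner variables. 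The involution $(s,t)\leftrightarrow(t,s)$ identifying the two tori then matches the products term by term, giving the required identity.

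The main obstacle is the derivation of the formula for $(T_p)_{i,j}$ in Step 1: one must recognise that the cyclic-trace sum collapses neatly into a product of $Q$-entries indexed by consecutive pairs $(i_sj_s,\,i_{s+1}j_{s+1})$ of the cyclic sequence. Once this is in hand, the matching of the two doubly-indexed sums on $\mathbb Z_r\times\mathbb Z_p$ is a routine combinatorial exercise in index bookkeeping.
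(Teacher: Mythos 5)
Your argument is correct and follows the same overall strategy as the paper's: identify the $p$-th moment of $\mu^r$ with $Tr(T_p^r)$ via Proposition 3.3\,(3), and match it against $tr(X^p)$ by rewriting everything in terms of the profile matrix $Q$. The difference lies in how the rewriting is organized. The paper expands $tr(P_{i_1j_1}\ldots P_{i_pj_p})$ all the way down to the entries of $H$, obtaining a large sum over row and column indices, and then reorders the summation so that each inner sum over a row index reassembles into an entry of $Q$. You instead isolate the cleaner intermediate identity
$$(T_p)_{i_1\ldots i_p,j_1\ldots j_p}=\frac{1}{N}\prod_{s\in\mathbb Z_p}Q_{i_sj_s,\,i_{s+1}j_{s+1}}$$
coming from the rank-one trace formula for a cyclic product of projections, after which both $N^r\,Tr(T_p^r)$ and $Tr(X^p)$ become sums over a $p\times r$ discrete torus of products of $Q$-entries, matched by transposing the torus. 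This is a genuine streamlining: it never touches the entries of $H$, and it makes the $p\leftrightarrow r$ symmetry underlying Theorem 4.4 visible at a glance. The one point you should make explicit is that the term-by-term matching under $(s,t)\leftrightarrow(t,s)$ uses the symmetry $Q_{ab,cd}=Q_{ac,bd}$ of the profile matrix (invariance under exchanging the two inner indices), which is immediate from $Q_{ab,cd}=\frac{1}{N}\sum_i\frac{H_{ia}H_{id}}{H_{ib}H_{ic}}$ but is exactly what justifies the claim that the two unit cells carry ``the same symmetric pattern of four corner variables.''
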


\begin{proof}
We compute the moments of $\mu^r$. We first have:
\begin{eqnarray*}
c_p^r
&=&Tr(T_p^r)=\sum_{i^1\ldots i^r}(T_p)_{i^1i^2}\ldots(T_p)_{i^ri^1}\\
&=&\sum_{i_1^1\ldots i_p^r}(T_p)_{i_1^1\ldots i_p^1,i_1^2\ldots i_p^2}\ldots\ldots(T_p)_{i_1^r\ldots i_p^r,i_1^1\ldots i_p^1}\\
&=&\sum_{i_1^1\ldots i_p^r}tr(P_{i_1^1i_1^2}\ldots P_{i_p^1i_p^2})\ldots\ldots tr(P_{i_1^ri_1^1}\ldots P_{i_p^ri_p^1})
\end{eqnarray*}

In terms of $H$, we obtain the following formula:
\begin{eqnarray*}
c_p^r
&=&\frac{1}{N^r}\sum_{i_1^1\ldots i_p^r}\sum_{a_1^1\ldots a_p^r}(P_{i_1^1i_1^2})_{a_1^1a_2^1}\ldots(P_{i_p^1i_p^2})_{a_p^1a_1^1}\ldots\ldots (P_{i_1^ri_1^1})_{a_1^ra_2^r}\ldots(P_{i_p^ri_p^1})_{a_p^ra_1^r}\\
&=&\frac{1}{N^{(p+1)r}}\sum_{i_1^1\ldots i_p^r}\sum_{a_1^1\ldots a_p^r}
\frac{H_{i_1^1a_1^1}H_{i_1^2a_2^1}}{H_{i_1^1a_2^1}H_{i_1^2a_1^1}}\ldots\frac{H_{i_p^1a_p^1}H_{i_p^2a_1^1}}{H_{i_p^1a_1^1}H_{i_p^2a_p^1}}\ldots\ldots\frac{H_{i_1^ra_1^r}H_{i_1^1a_2^r}}{H_{i_1^ra_2^r}H_{i_1^1a_1^r}}\ldots\frac{H_{i_p^ra_p^r}H_{i_p^1a_1^r}}{H_{i_p^ra_1^r}H_{i_p^1a_p^r}}
\end{eqnarray*}

Now by changing the order of the summation, we obtain:
\begin{eqnarray*}
c_p^r=\frac{1}{N^{(p+1)r}}\sum_{a_1^1\ldots a_p^r}
&&\sum_{i_1^1}\frac{H_{i_1^1a_1^1}H_{i_1^1a_2^r}}{H_{i_1^1a_2^1}H_{i_1^1a_1^r}}\ldots\ldots\sum_{i_1^r}\frac{H_{i_1^ra_2^{r-1}}H_{i_1^ra_1^r}}{H_{i_1^ra_1^{r-1}}H_{i_1^ra_2^r}}\\
&&\ldots\\
&&\sum_{i_p^1}\frac{H_{i_p^1a_p^1}H_{i_p^1a_1^r}}{H_{i_p^1a_1^1}H_{i_p^1a_p^r}}\ldots\ldots\sum_{i_p^r}\frac{H_{i_p^ra_1^{r-1}}H_{i_p^ra_p^r}}{H_{i_p^ra_p^{r-1}}H_{i_p^ra_1^r}}
\end{eqnarray*}

In terms of $Q$, and then of the matrix $X$ in the statement, we get:
\begin{eqnarray*}
c_p^r
&=&\frac{1}{N^r}\sum_{a_1^1\ldots a_p^r}(Q_{a_1^1a_2^1,a_1^ra_2^r}\ldots Q_{a_1^ra_2^r,a_1^{r-1}a_2^{r-1}}) \ldots\ldots(Q_{a_p^1a_1^1,a_p^ra_1^r}\ldots Q_{a_p^ra_1^r,a_p^{r-1}a_1^{r-1}})\\
&=&\frac{1}{N^r}\sum_{a_1^1\ldots a_p^r}X_{a_1^1\ldots a_1^r,a_2^1\ldots a_2^r}\ldots\ldots X_{a_p^1\ldots a_p^r,a_1^1\ldots a_1^r}\\
&=&\frac{1}{N^r}Tr(X^p)=tr(X^p)
\end{eqnarray*}

But this gives the formula in the statement, and we are done.
\end{proof}

Observe that the above result covers the previous computations of $\mu^0,\mu^1,\mu^2$, and in particular the formula of $\mu^2$ in Proposition 3.4 (3). Indeed, at $r=2$ we have:
$$X_{ab,cd}=Q_{ac,bd}Q_{bd,ac}=Q_{ab,cd}\overline{Q_{ab,cd}}=|Q_{ab,cd}|^2$$

We will discuss in the next section some further interpretations of $\mu^r$.

\section{Basic properties, examples}

Let us first take a closer look at the matrices $X$ appearing in Theorem 3.5. These are in fact Gram matrices, of certain norm one vectors:

\begin{proposition}
We have $\mu^r=law(X)$, with $X_{a_1\ldots a_r,b_1\ldots b_r}=<\xi_{a_1\ldots a_r},\xi_{b_1\ldots b_r}>$, where: 
$$\xi_{a_1\ldots a_r}=\frac{1}{\sqrt{N}}\cdot\frac{H_{a_1}}{H_{a_2}}\otimes\ldots\otimes\frac{1}{\sqrt{N}}\cdot\frac{H_{a_r}}{H_{a_1}}$$
In addition, these vectors $\xi_{a_1\ldots a_r}$ are all of norm one.
\end{proposition}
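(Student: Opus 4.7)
The plan is to reduce this statement to Theorem 3.5 via a direct identification of the entries of the Gram matrix with the expression $Q_{a_1b_1,a_2b_2}\ldots Q_{a_rb_r,a_1b_1}$ already appearing there. So no new integration computation is needed; the content is purely a combinatorial rewriting of the profile matrix $Q$ in terms of inner products of tensor factors.

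First I would compute the inner product of two tensor product vectors. Since the inner product of tensor products factorizes, and $\xi_{a_1\ldots a_r}$ has $k$-th factor $\tfrac{1}{\sqrt{N}}\cdot H_{a_k}/H_{a_{k+1}}$ (indices cyclic, $a_{r+1}=a_1$), I obtain
$$\langle\xi_{a_1\ldots a_r},\xi_{b_1\ldots b_r}\rangle=\prod_{k=1}^r\frac{1}{N}\left\langle\frac{H_{a_k}}{H_{a_{k+1}}},\frac{H_{b_k}}{H_{b_{k+1}}}\right\rangle.$$
Expanding each factor via the definition of the scalar product in $\mathbb C^N$ gives $\tfrac{1}{N}\sum_i H_{i a_k}H_{i b_{k+1}}/(H_{i a_{k+1}}H_{i b_k})$.

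The key identification is now a trivial rearrangement of the four-index ratio inside the sum: by comparing with the defining formula $Q_{ab,cd}=\tfrac{1}{N}\sum_i H_{ia}H_{id}/(H_{ib}H_{ic})$, one checks
$$\frac{1}{N}\left\langle\frac{H_{a_k}}{H_{a_{k+1}}},\frac{H_{b_k}}{H_{b_{k+1}}}\right\rangle=Q_{a_kb_k,a_{k+1}b_{k+1}}.$$
Taking the product over $k=1,\ldots,r$ recovers precisely the expression for $X_{a_1\ldots a_r,b_1\ldots b_r}$ from Theorem 3.5, proving the Gram presentation. I do not expect any obstacle here beyond being careful with the cyclic indexing convention on the subscripts of $a$ and $b$.

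For the norm one claim, each tensor factor $\tfrac{1}{\sqrt N}\cdot H_{a_k}/H_{a_{k+1}}$ is a vector in $\mathbb C^N$ whose entries lie on the unit circle (quotient of two entries of modulus one), so its squared norm is $\tfrac{1}{N}\cdot N=1$. Multiplicativity of the norm on tensor products then gives $\|\xi_{a_1\ldots a_r}\|=1$. This step is immediate and requires no further argument.
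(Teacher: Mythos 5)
Your proposal is correct and follows essentially the same route as the paper: both proofs rest on the single identity $\frac{1}{N}\langle H_{a_k}/H_{a_{k+1}},H_{b_k}/H_{b_{k+1}}\rangle=Q_{a_kb_k,a_{k+1}b_{k+1}}$ (obtained by re-pairing the four indices in the defining sum for $Q$), with the paper running the computation from $X$ to the Gram matrix and you running it in the reverse direction. The norm-one argument is also the one the paper has in mind.
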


\begin{proof}
The first assertion follows from the following computation:
\begin{eqnarray*}
X_{a_1\ldots a_r,b_1\ldots b_r}
&=&\frac{1}{N^r}\left\langle\frac{H_{a_1}}{H_{b_1}},\frac{H_{a_2}}{H_{b_2}}\right\rangle\ldots\left\langle\frac{H_{a_r}}{H_{b_r}},\frac{H_{a_1}}{H_{b_1}}\right\rangle\\
&=&\frac{1}{N^r}\left\langle\frac{H_{a_1}}{H_{a_2}},\frac{H_{b_1}}{H_{b_2}}\right\rangle\ldots\left\langle\frac{H_{a_r}}{H_{a_1}},\frac{H_{b_r}}{H_{b_1}}\right\rangle\\
&=&\frac{1}{N^r}\left\langle\frac{H_{a_1}}{H_{a_2}}\otimes\ldots\otimes\frac{H_{a_r}}{H_{a_1}},\frac{H_{b_1}}{H_{b_2}}\otimes\ldots\otimes\frac{H_{b_r}}{H_{b_1}}\right\rangle
\end{eqnarray*}

As for the second assertion, this is clear from the formula of $\xi_{a_1\ldots a_r}$.
\end{proof}

At the level of concrete examples now, we first have:

\begin{proposition}
For a Fourier matrix $H=F_G$ we have:
\begin{enumerate}
\item $Q_{ab,cd}=\delta_{a+d,b+c}$.

\item $X_{a_1\ldots a_r,b_1\ldots b_r}=\delta_{a_1-b_1,\ldots,a_r-b_r}$.

\item $X^2=NX$, so $X/N$ is a projection.
\end{enumerate}
\end{proposition}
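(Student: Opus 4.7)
The plan is to derive all three claims from a single input, namely the standard character orthogonality $\frac{1}{N}\sum_{i\in G}\langle i,g\rangle=\delta_{g,0}$, applied to the Fourier matrix $H_{ij}=\langle i,j\rangle$ of $G$.

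For (1), I would unfold the definition of the profile matrix:
$$Q_{ab,cd}=\frac{1}{N}\sum_{i\in G}\frac{\langle i,a\rangle\langle i,d\rangle}{\langle i,b\rangle\langle i,c\rangle}=\frac{1}{N}\sum_{i\in G}\langle i,a+d-b-c\rangle,$$
which collapses to $\delta_{a+d,b+c}$ by orthogonality. For a product group $G=\mathbb Z_{N_1}\times\cdots\times\mathbb Z_{N_k}$ the identical computation goes through component-wise, since $F_G$ tensorizes as a Fourier matrix and both sides of the profile formula factor over the tensor decomposition.

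For (2), I substitute (1) into the formula of Theorem 3.5, which turns the cyclic product defining $X$ into a cyclic product of Kronecker deltas:
$$X_{a_1\ldots a_r,b_1\ldots b_r}=\prod_{i=1}^{r}\delta_{a_i-b_i,\,a_{i+1}-b_{i+1}}\qquad(\text{indices mod }r).$$
This equals $1$ precisely when $a_1-b_1=a_2-b_2=\cdots=a_r-b_r$, which is the claimed $\delta_{a_1-b_1,\ldots,a_r-b_r}$.

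For (3), I compute $(X^2)_{a,b}=\sum_c X_{a,c}X_{c,b}$ using (2). A summand is nonzero precisely when both $a_i-c_i$ is independent of $i$ (with common value $t\in G$, say) and $c_i-b_i$ is independent of $i$. The first condition forces $c_i=a_i-t$, after which $c_i-b_i=(a_i-b_i)-t$ is constant in $i$ if and only if $a_i-b_i$ itself is, i.e.\ $X_{a,b}\neq 0$. Conversely, whenever $X_{a,b}=1$, each of the $N$ choices of $t\in G$ yields a valid $c$, so the sum collects $N$ ones. Therefore $(X^2)_{a,b}=N\cdot X_{a,b}$, giving $X^2=NX$; since $X$ is a Gram matrix and hence self-adjoint, $X/N$ is an orthogonal projection. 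The only step with any bookkeeping to speak of is the counting in (3), but the explicit delta-form of $X$ from (2) reduces it to a one-parameter family indexed by $t\in G$, so no genuine obstacle arises.
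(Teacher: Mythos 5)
Your proposal is correct and follows essentially the same route as the paper: orthogonality of characters for (1), substitution into the cyclic product formula of Theorem 3.5 for (2), and the count of $N$ shift vectors $t\in G$ for (3), with self-adjointness of the Gram matrix closing the argument. No gaps.
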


\begin{proof}
We use the formulae $H_{ij}H_{ik}=H_{i,j+k}$, $\overline{H}_{ij}=H_{i,-j}$ and $\sum_iH_{ij}=N\delta_{j0}$.

(1) We have indeed the following computation:
$$Q_{ab,cd}=\frac{1}{N}\sum_iH_{i,a+d-b-c}=\delta_{a+d,b+c}$$

(2) This follows from the following computation:
\begin{eqnarray*}
X_{a_1\ldots a_r,b_1\ldots b_r}
&=&\delta_{a_1+b_2,b_1+a_2}\ldots\delta_{a_r+b_1,b_r+a_1}\\
&=&\delta_{a_1-b_1,a_2-b_2}\ldots\delta_{a_r-b-r,a_1-b_1}\\
&=&\delta_{a_1-b_1,\ldots,a_r-b_r}
\end{eqnarray*}

(3) By using the formula in (2) above, we obtain:
\begin{eqnarray*}
(X^2)_{a_1\ldots a_r,b_1\ldots b_r}
&=&\sum_{c_1\ldots c_r}X_{a_1\ldots a_r,c_1\ldots c_r}X_{c_1\ldots c_r,b_1\ldots b_r}\\
&=&\sum_{c_1\ldots c_r}\delta_{a_1-c_1,\ldots,a_r-c_r}\delta_{c_1-b_1,\ldots,c_r-b_r}\\
&=&N\delta_{a_1-b_1,\ldots,a_r-b_r}=NX_{a_1\ldots a_r,b_1\ldots b_r}
\end{eqnarray*}

Thus $(X/N)^2=X/N$, and since $X/N$ is as well self-adjoint, it is a projection.
\end{proof}

Another situation which is elementary is the tensor product one:

\begin{proposition}
Let $L=H\otimes K$.
\begin{enumerate}
\item $Q^L_{iajb,kcld}=Q^H_{ij,kl}Q^K_{ab,cd}$.

\item $X^L_{i_1a_1\ldots i_ra_r,j_1b_1\ldots j_rb_r}=X^H_{i_1\ldots i_r,j_1\ldots j_r}X^K_{a_1\ldots a_r,b_1\ldots b_r}$.

\item $\mu_L^r=\mu_H^r*\mu_K^r$, for any $r\geq0$.
\end{enumerate}
\end{proposition}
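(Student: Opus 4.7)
The plan is to establish (1) by a direct computation from the definition
$$Q_{ab,cd}=\frac{1}{N}\sum_i\frac{H_{ia}H_{id}}{H_{ib}H_{ic}},$$
writing $L_{(pq),(ij)}=H_{pi}K_{qj}$. The summation index in the defining formula for $Q^L_{iajb,kcld}$ ranges over pairs $(pq)$, and the summand is a ratio of four such products; the $H$-factors and $K$-factors separate completely in that ratio, so the double sum factors as a sum over $p$ times a sum over $q$, giving exactly $Q^H_{ij,kl}\cdot Q^K_{ab,cd}$. The normalization $\tfrac{1}{NM}=\tfrac{1}{N}\cdot\tfrac{1}{M}$ splits correctly as well.

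Then (2) is an immediate consequence of (1) combined with Theorem 3.5: by definition
$$X^L_{(i_1a_1)\ldots (i_ra_r),(j_1b_1)\ldots (j_rb_r)}=Q^L_{(i_1a_1)(j_1b_1),(i_2a_2)(j_2b_2)}\cdots Q^L_{(i_ra_r)(j_rb_r),(i_1a_1)(j_1b_1)},$$
and each of the $r$ factors splits by (1). Regrouping the $H$-factors separately from the $K$-factors (the reordering is allowed since these are scalars) yields precisely $X^H_{i_1\ldots i_r,j_1\ldots j_r}\cdot X^K_{a_1\ldots a_r,b_1\ldots b_r}$. The only thing one has to be a little careful about here is the index bookkeeping, but it is routine.

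Finally, (3) follows from (2) by a standard spectral argument. Part (2) says that, once we order the combined basis by $((i_1\ldots i_r),(a_1\ldots a_r))$, the Gram matrix $X^L$ is the Kronecker tensor product $X^H\otimes X^K$. Consequently the eigenvalues of $X^L$ are the pairwise products of the eigenvalues of $X^H$ and $X^K$, counted with multiplicity, and the normalized trace satisfies $\mathrm{tr}((X^L)^p)=\mathrm{tr}((X^H)^p)\cdot\mathrm{tr}((X^K)^p)$ for every $p$; thus by Proposition 3.3 (3) and Theorem 3.5, all moments of $\mu_L^r$ factor as products of the corresponding moments of $\mu_H^r$ and $\mu_K^r$. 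This moment identity is exactly the statement that $\mu_L^r$ is the multiplicative convolution $\mu_H^r*\mu_K^r$, i.e.\ the push-forward of the product measure under $(x,y)\mapsto xy$. There is no real obstacle in this argument; the only genuine content lies in (1), and (2)--(3) are consequences of the multiplicativity that (1) provides.
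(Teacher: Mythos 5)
Your proposal is correct and follows essentially the same route as the paper: a direct factorization of the profile matrix for (1), the resulting entrywise splitting of $X^L$ as $X^H\otimes X^K$ for (2), and the identification of the law of a Kronecker product with the multiplicative convolution of the laws for (3). Your spelling out of (3) via matching moments $tr((X^L)^p)=tr((X^H)^p)\,tr((X^K)^p)$ is just a more explicit version of the paper's one-line remark.
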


\begin{proof}
(1) This follows from the following computation:
\begin{eqnarray*}
Q^L_{iajb,kcld}
&=&\frac{1}{NM}\sum_{me}\frac{L_{me,ia}L_{me,ld}}{L_{me,kc}L_{me,jb}}
=\frac{1}{NM}\sum_{me}\frac{H_{mi}K_{ea}H_{ml}K_{ld}}{H_{mk}K_{ec}H_{mj}K_{eb}}\\
&=&\frac{1}{N}\sum_m\frac{H_{mi}H_{ml}}{H_{mk}H_{mj}}\cdot\frac{1}{M}\sum_e\frac{K_{ea}K_{ed}}{K_{ec}K_{eb}}=Q^H_{ij,kl}Q^K_{ab,cd}
\end{eqnarray*}

(2) This follows from (2) above, because we have:
\begin{eqnarray*}
X^L_{i_1a_1\ldots i_ra_r,j_1b_1\ldots j_rb_r}
&=&Q^L_{i_1a_1j_1b_1,1_2a_2j_2b_2}\ldots Q^L_{i_ra_rj_rb_r,i_1a_1j_1b_1}\\
&=&Q^H_{i_1j_1,i_2j_2}Q^K_{a_1b_1,a_2b_2}\ldots Q^H_{i_rj_r,i_1j_1}Q^K_{a_rb_r,a_1b_1}\\
&=&X^H_{i_1\ldots i_r,j_1\ldots j_r}X^K_{a_1\ldots a_r,b_1\ldots b_r}
\end{eqnarray*}

(3) This follows from (3) above, which tells us that, modulo certain standard indentifications, we have $X^L=X^H\otimes X^K$.
\end{proof}

We will be back in section 5 below to the study of concrete examples. Now let us discuss some general duality issues. We have here:

\begin{theorem}
We have the moment/truncation duality formula
$$\int_{G_H}^r\left(\frac{\chi}{N}\right)^p=\int_{G_{H^t}}^p\left(\frac{\chi}{N}\right)^r$$
where $G_H,G_{H^t}$ are the quantum groups associated to $H,H^t$.
\end{theorem}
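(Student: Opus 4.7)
The plan is to expand both sides using the explicit combinatorial formula derived in the proof of Theorem 3.5, then observe that the two resulting sums are identical after a relabeling of dummy indices, with the $N$-prefactors differing in exactly the way that accounts for the $1/N^p$ and $1/N^r$ factors in the statement.

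The first step combines Proposition 3.3(3), which gives $\int_{G_H}^r\chi^p = Tr(T_p^r)$, with the expansion of $Tr(T_p^r)$ carried out in the proof of Theorem 3.5, yielding
$$
\int_{G_H}^r\chi^p = \frac{1}{N^{(p+1)r}}\sum_{i,a}\prod_{\alpha=1}^p\prod_{\beta=1}^r\frac{H_{i_\alpha^\beta a_\alpha^\beta}\, H_{i_\alpha^{\beta+1}a_{\alpha+1}^\beta}}{H_{i_\alpha^\beta a_{\alpha+1}^\beta}\, H_{i_\alpha^{\beta+1}a_\alpha^\beta}},
$$
where the sum runs over all $2pr$ variables $i_\alpha^\beta, a_\alpha^\beta\in\{1,\ldots,N\}$, with $\alpha$ cyclic mod $p$ and $\beta$ cyclic mod $r$. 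The analogous formula for $\int_{G_{H^t}}^p\chi^r$ is obtained by interchanging $p$ and $r$ and replacing $H$ by $H^t$.

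The heart of the argument is the identification of the two sums. The operation $H\to H^t$ interchanges the two arguments of every $H$-entry, while swapping $p\leftrightarrow r$ interchanges the roles of the $i$- and $a$-grids. Concretely, after renaming the $H^t$-side summation indices so that both sums are indexed over the same rectangular $p\times r$ grid, and identifying the transposed $i$-family with the original $a$-family (and vice versa), I would verify cell by cell that at each $(\alpha,\beta)$ the two numerator $H$-factors coincide exactly, and the two denominator $H$-factors also coincide up to a harmless reordering of the two entries. This establishes the equality of the two multi-index sums.

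The final step compares the prefactors: $N^{-(p+1)r}$ on the $H$-side and $N^{-(r+1)p}$ on the $H^t$-side, with ratio $N^{p-r}$. Dividing by $N^p$ on the left and $N^r$ on the right yields the common prefactor $N^{-(pr+p+r)}$ in front of the common sum, proving the identity. The main obstacle is purely bookkeeping: managing the $p\times r$ doubly-indexed families of dummy variables and verifying that the combined operation ``transpose $H$ and swap $p\leftrightarrow r$'' really is a term-by-term symmetry of the integrand. No new structural idea beyond those already present in the proof of Theorem 3.5 is required.
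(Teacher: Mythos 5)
Your proposal is correct and follows essentially the same route as the paper's proof: both expand $\int_G^r\chi^p=Tr(T_p^r)$ via the explicit sum from the proof of Theorem 3.5, identify the $H$-sum with the transposed, $i\leftrightarrow a$-swapped $H^t$-sum term by term, and then match the prefactors $N^{-(p+1)r-p}=N^{-(r+1)p-r}$. The cell-by-cell check you defer does go through exactly as you describe (numerators equal, denominators equal up to swapping the two factors), so nothing is missing.
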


\begin{proof}
We use the following formula, from the proof of Theorem 3.5:
$$c_p^r=\frac{1}{N^{(p+1)r}}\sum_{i_1^1\ldots i_p^r}\sum_{a_1^1\ldots a_p^r}
\frac{H_{i_1^1a_1^1}H_{i_1^2a_2^1}}{H_{i_1^1a_2^1}H_{i_1^2a_1^1}}\ldots\frac{H_{i_p^1a_p^1}H_{i_p^2a_1^1}}{H_{i_p^1a_1^1}H_{i_p^2a_p^1}}\ldots\ldots\frac{H_{i_1^ra_1^r}H_{i_1^1a_2^r}}{H_{i_1^ra_2^r}H_{i_1^1a_1^r}}\ldots\frac{H_{i_p^ra_p^r}H_{i_p^1a_1^r}}{H_{i_p^ra_1^r}H_{i_p^1a_p^r}}$$

By interchanging $p\leftrightarrow r$, and by transposing as well all the summation indices, according to the rules $i_x^y\to i_y^x$ and $a_x^y\to a_y^x$, we obtain the following formula:
$$c_r^p=\frac{1}{N^{(r+1)p}}\sum_{i_1^1\ldots i_p^r}\sum_{a_1^1\ldots a_p^r}
\frac{H_{i_1^1a_1^1}H_{i_2^1a_1^2}}{H_{i_1^1a_1^2}H_{i_2^1a_1^1}}\ldots\frac{H_{i_1^ra_1^r}H_{i_2^ra_1^1}}{H_{i_1^ra_1^1}H_{i_2^ra_1^r}}\ldots\ldots\frac{H_{i_p^1a_p^1}H_{i_1^1a_p^2}}{H_{i_p^1a_p^2}H_{i_1^1a_p^1}}\ldots\frac{H_{i_p^ra_p^r}H_{i_1^ra_p^1}}{H_{i_p^ra_p^1}H_{i_1^ra_p^r}}$$

Now by interchaging all the summation indices, $i_x^y\leftrightarrow a_x^y$, we obtain:
$$c_r^p=\frac{1}{N^{(r+1)p}}\sum_{i_1^1\ldots i_p^r}\sum_{a_1^1\ldots a_p^r}
\frac{H_{a_1^1i_1^1}H_{a_2^1i_1^2}}{H_{a_1^1i_1^2}H_{a_2^1i_1^1}}\ldots\frac{H_{a_1^ri_1^r}H_{a_2^ri_1^1}}{H_{a_1^ri_1^1}H_{a_2^ri_1^r}}\ldots\ldots\frac{H_{a_p^1i_p^1}H_{a_1^1i_p^2}}{H_{a_p^1i_p^2}H_{a_1^1i_p^1}}\ldots\frac{H_{a_p^ri_p^r}H_{a_1^ri_p^1}}{H_{a_p^ri_p^1}H_{a_1^ri_p^r}}$$

With $H\to H^t$, we obtain the following formula, this time for $H^t$:
$$c_r^p=\frac{1}{N^{(r+1)p}}\sum_{i_1^1\ldots i_p^r}\sum_{a_1^1\ldots a_p^r}
\frac{H_{i_1^1a_1^1}H_{i_1^2a_2^1}}{H_{i_1^2a_1^1}H_{i_1^1a_2^1}}\ldots\frac{H_{i_1^ra_1^r}H_{i_1^1a_2^r}}{H_{i_1^1a_1^r}H_{i_1^ra_2^r}}\ldots\ldots\frac{H_{i_p^1a_p^1}H_{i_p^2a_1^1}}{H_{i_p^2a_p^1}H_{i_p^1a_1^1}}\ldots\frac{H_{i_p^ra_p^r}H_{i_p^1a_1^r}}{H_{i_p^1a_p^r}H_{i_p^ra_1^r}}$$

The point now is that, modulo a permutation of terms, the quantity on the right is exactly the one as in the above formula of $c_p^r$. Thus, if we denote by $\alpha$ this quantity:
$$c_p^r(H)=\frac{\alpha}{N^{(p+1)r}},\qquad c_r^p(H^t)=\frac{\alpha}{N^{(r+1)p}}$$

Thus we have $N^rc_p^r(H)=N^pc_r^p(H^t)$, and by dividing by $N^{p+r}$, we obtain:
$$\frac{c_p^r(H)}{N^p}=\frac{c_r^p(H^t)}{N^r}$$

But this gives the formula in the statement, and we are done.
\end{proof}

The above result shows that the normalized moments $\gamma_p^r=\frac{c_p^r}{N^p}$ are subject to the condition $\gamma_p^r(H)=\gamma_r^p(H^t)$. We have the following table of $\gamma_p^r$ numbers for $H$:
$$\begin{bmatrix}
p\backslash r&1&2&r&\infty\\
1&1/N&1/N&1/N&1/N\\
2&1/N&tr(S/N)^2&tr(S/N)^r&c_2\\
p&1/N&tr(S/N)^p&?&c_p\\
\infty&1/N&c_2&\mu^r(1)&\mu(1)
\end{bmatrix}$$

Here we have used the well-known fact that for $supp(\mu)\subset[0,1]$ we have $c_p\to\mu(1)$, something which is clear for discrete measures, and for continuous measures too.

Since the table for $H^t$ is transpose to the table of $H$, we obtain:

\begin{proposition}
$\mu_H(1)=\mu_{H^t}(1)$.
\end{proposition}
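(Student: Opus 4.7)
The statement asserts $\mu_H(\{N\})=\mu_{H^t}(\{N\})$, since $\mu(1)$ here denotes the mass at $1$ of the pushforward of $\mu$ under $x\mapsto x/N$, that is, $\mu(\{N\})$. My plan is to chain the moment/truncation duality of Theorem 4.4 with two layers of Ces\`aro averaging, and then to close with a Portmanteau upper semi-continuity argument that exploits the $H\leftrightarrow H^t$ symmetry of the hypothesis.

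Starting from $\int (x/N)^p\,d\mu_H^r(x)=\int(x/N)^r\,d\mu_{H^t}^p(x)$ and averaging over $1\le p\le k$ and $1\le r\le l$, I introduce the continuous auxiliary functions $f_k(x)=\frac{1}{k}\sum_{p=1}^k(x/N)^p$ and $g_l(x)=\frac{1}{l}\sum_{r=1}^l(x/N)^r$ on $[0,N]$, together with the Ces\`aro measures $\bar\mu^l=\frac{1}{l}\sum_{r=1}^l\mu^r$. The duality then reads
$$\int f_k\,d\bar\mu_H^l=\int g_l\,d\bar\mu_{H^t}^k.$$

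Next I send $l\to\infty$. Proposition 3.3(2) combined with Stone--Weierstrass on $[0,N]$ gives $\bar\mu_H^l\to\mu_H$ weakly, and since $f_k$ is continuous the LHS converges to $\int f_k\,d\mu_H$. On the RHS, $g_l\to 1_{\{N\}}$ pointwise on $[0,N]$ with $|g_l|\le 1$, so bounded convergence against the fixed measure $\bar\mu_{H^t}^k$ yields $\bar\mu_{H^t}^k(\{N\})$. This produces the identity $\int f_k\,d\mu_H=\bar\mu_{H^t}^k(\{N\})$. Now letting $k\to\infty$ and using that $f_k\to 1_{\{N\}}$ pointwise and boundedly, bounded convergence on the left side yields
$$\mu_H(\{N\})=\lim_{k\to\infty}\bar\mu_{H^t}^k(\{N\}).$$

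Finally, $\{N\}$ is closed in $[0,N]$ and $\bar\mu_{H^t}^k\to\mu_{H^t}$ weakly, so the Portmanteau inequality $\limsup_k\bar\mu_{H^t}^k(\{N\})\le\mu_{H^t}(\{N\})$ delivers $\mu_H(\{N\})\le\mu_{H^t}(\{N\})$; running the same chain with the roles of $H$ and $H^t$ exchanged supplies the reverse inequality, and hence equality. The main obstacle I anticipate is precisely this last step: weak convergence of measures does not in general preserve point masses, so the naive ``transpose the table'' heuristic is not by itself a proof, and it is the $H\leftrightarrow H^t$ symmetry of Theorem 4.4 that upgrades a one-sided Portmanteau estimate into the required equality.
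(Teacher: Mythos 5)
Your proof is correct, and its core is the same as the paper's: apply Theorem 4.4 and let $p,r\to\infty$. The difference lies in how the double limit is handled. The paper's proof is a one-line appeal to the table of normalized moments $\gamma_p^r=c_p^r/N^p$: transpose the table and read off the $(\infty,\infty)$ corner. That argument silently uses that the bottom row $\mu^r(1)$ converges (in Ces\`aro mean) to $\mu(1)$, i.e.\ that the point masses at $N$ of the truncated measures pass to the limit --- which, as you rightly observe, weak convergence alone does not guarantee. Your version supplies exactly the missing justification: the doubly averaged identity $\int f_k\,d\bar\mu_H^l=\int g_l\,d\bar\mu_{H^t}^k$, dominated convergence in each variable separately (legitimate because one measure is held fixed while the test function converges pointwise and boundedly to $1_{\{N\}}$), and finally the Portmanteau upper bound $\limsup_k\bar\mu_{H^t}^k(\{N\})\le\mu_{H^t}(\{N\})$ for the closed set $\{N\}$, upgraded to an equality by the $H\leftrightarrow H^t$ symmetry of Theorem 4.4 (using $(H^t)^t=H$). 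What your route buys is rigor at the one step the paper glosses over; what the paper's route buys is brevity and the suggestive table picture. Both arguments rest on the same duality formula, and your reading of $\mu(1)$ as the mass of $\mu$ at $N$ (equivalently, of the rescaled measure at $1$) is the intended one, consistent with Proposition 4.6.
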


\begin{proof}
This follows indeed from Theorem 4.4, by letting $p,r\to\infty$.
\end{proof}

Observe that this result recovers a bit of Theorem 2.3, because we have:

\begin{proposition}
For $G\subset S_N^+$ finite we have $\mu(1)=\frac{1}{|G|}$.
\end{proposition}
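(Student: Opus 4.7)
The plan is to interpret $\mu(1)$ as the atomic mass of the normalized spectral measure at the top of its support, i.e.\ as $h(E_N)$, where $h$ is the Haar state on $C(G)$ and $E_N \in C(G)$ is the spectral projection of the main character $\chi = \sum_i u_{ii}$ at the value $N$. The goal then reduces to showing $h(E_N) = 1/|G|$ in the finite case.

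The first step will be to identify $E_N$ with the counit/Haar projection of the finite-dimensional Hopf $C^*$-algebra $C(G)$. Since $\chi$ is a sum of $N$ projections bounded by $N$, the equation $\chi E_N = N E_N$ forces $u_{ii} E_N = E_N$ for every $i$. Then, because $(u_{ij})$ is magic, the rows of $u$ consist of pairwise orthogonal projections summing to $1$, which gives $u_{ij} u_{ii} = 0$ for $i \neq j$, and hence $u_{ij} E_N = 0 = \delta_{ij} E_N$. Summarizing, $u_{ij} E_N = \varepsilon(u_{ij}) E_N$ for all $i,j$, where $\varepsilon$ is the counit. Since the $u_{ij}$ generate $C(G)$ and $\varepsilon$ is multiplicative, a straightforward induction on word length yields $f E_N = \varepsilon(f) E_N$ for every $f \in C(G)$; in other words, $E_N$ is a (left) integral of the Hopf algebra $C(G)$.

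The second step will invoke the standard fact that in a finite-dimensional $C^*$-Hopf algebra (automatically of Kac type when $G \subset S_N^+$) there is, up to a scalar, a unique non-zero element $p_e$ satisfying $f p_e = \varepsilon(f) p_e$ for all $f$, that this $p_e$ is a projection, and that $h(p_e) = 1/\dim C(G) = 1/|G|$. In the classical case this reduces to $h(\delta_e) = 1/|G|$; the quantum case follows from the standard duality between Haar integrals and Haar states for finite quantum groups. Combining the two steps yields $\mu(1) = h(E_N) = h(p_e) = 1/|G|$, as required.

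The main delicate point will be Step 1: checking that $E_N$ \emph{equals} the Haar projection, rather than merely being dominated by it. The crucial ingredient is the magic-matrix implication ``$u_{ii} E_N = E_N$ for every $i$'' $\Rightarrow$ ``$u_{ij} E_N = 0$ for $i \neq j$'', which comes directly from orthogonality of projections in a single row and the fact that rows sum to $1$. After that, the extension from generators to all of $C(G)$, and the numerical identity in Step 2, are essentially automatic from general finite quantum group theory.
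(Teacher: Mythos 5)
Your proof is correct, but it follows a genuinely different route from the paper's. The paper works with the principal graph of $G$: writing the moments as $c_p=(A^p)_{11}$ for the adjacency matrix $A$, it computes $\mu(1)=\lim_{p\to\infty}c_p/N^p=\xi_1^2/\|\xi\|^2$ by a Perron--Frobenius argument, and then uses $\xi_r=\dim(r)$ to obtain $1/\sum_r\dim(r)^2=1/|G|$. You instead identify the atom $\mu(1)=h(E_N)$ directly, where $E_N\in C(G)$ is the spectral projection of $\chi$ at $N$, and use the magic relations to show that $E_N$ is a left integral, hence proportional to the Haar projection $p_e$, whence $h(E_N)=1/\dim C(G)=1/|G|$. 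Your argument is more structural and avoids the asymptotics entirely; the paper's version has the advantage of being phrased in exactly the form $c_p/N^p\to\mu(1)$ in which the statement is actually used in the surrounding table and in Proposition 4.5. One small point you should make explicit: Step 1 only shows that $E_N$ is a scalar multiple of $p_e$, so a priori $E_N\in\{0,p_e\}$, and $E_N=0$ would ruin the conclusion. To exclude it, note that $u_{ij}p_e=\delta_{ij}p_e$ gives $\chi p_e=Np_e$, hence $p_e\leq E_N$ and therefore $E_N=p_e$. With that line added, and granting the standard fact $h(p_e)=1/\dim C(G)$ for finite quantum groups (which reduces, via Peter--Weyl, to the same identity $\sum_r\dim(r)^2=|G|$ that the paper invokes), the proof is complete.
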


\begin{proof}
The idea is to use the principal graph. So, let first $\Gamma$ be an arbitrary finite graph, with a distinguished vertex denoted $1$, let $A\in M_M(0,1)$ with $M=|\Gamma|$ be its adjacency matrix, set $N=||\Gamma||$, and let $\xi\in\mathbb R^M$ be a Perron-Frobenius eigenvector for $A$, known to be unique up to multiplication by a scalar. Our claim is that we have:
$$\lim_{p\to\infty}\frac{(A^p)_{11}}{N^p}=\frac{\xi_1^2}{||\xi||^2}$$

Indeed, if we choose an orthonormal basis of eigenvectors $(\xi^i)$, with $\xi^1=\xi/||\xi||$, and write $A=UDU^t$, with $U=[\xi^1\ldots\xi^M]$ and $D$ diagonal, then we have, as claimed:
\begin{eqnarray*}
(A^p)_{11}
&=&(UD^pU^t)_{11}=\sum_kU_{1k}^2D^p_{kk}\simeq U_{11}^2N^p=\frac{\xi_1^2}{||\xi||^2}N^p
\end{eqnarray*}

Now back to our quantum group $G\subset S_N^+$, let $\Gamma$ be its principal graph, having as vertices the elements $r\in Irr(G)$. The moments of $\mu$ being the numbers $c_p=(A^p)_{11}$, we have:
$$\mu(1)=\lim_{p\to\infty}\frac{c_p}{N^p}=\lim_{p\to\infty}\frac{(A^p)_{11}}{N^p}=\frac{\xi_1^2}{||\xi||^2}$$

On the other hand, it is known that with the normalization $\xi_1=1$, the entries of the Perron-Frobenius eigenvector are simply $\xi_r=\dim(r)$. Thus we have:
$$\frac{\xi_1^2}{||\xi||^2}=\frac{1}{\sum_r\dim(r)^2}=\frac{1}{|G|}$$

Together with the above formula of $\mu(1)$, this finishes the proof.
\end{proof}

\section{Deformed Fourier matrices}

In this section we study the deformed Fourier matrices, $L=F_M\otimes_QF_N$, constructed by Di\c t\u a in \cite{dit}. These matrices are defined by $L_{ia,jb}=Q_{ib}(F_M)_{ij}(F_N)_{ab}$. 

We first have the following technical result:

\begin{proposition}
Let $H=F_M\otimes_QF_N$, and set $R_{ab,cd}^x=\frac{1}{M}\sum_mw^{mx}\frac{Q_{ma}Q_{md}}{Q_{mc}Q_{mb}}$.
\begin{enumerate}
\item $Q_{iajb,kcld}=\delta_{a-b,c-d}R_{ab,cd}^{i+l-k-j}$.

\item $X_{i_1a_1\ldots i_ra_r,j_1b_1\ldots j_rb_r}=\delta_{a_1-b_1,\ldots,a_r-b_r}R_{a_1b_1,a_2b_2}^{i_1+j_2-j_1-i_2}\ldots R_{a_rb_r,a_1b_1}^{i_r+j_1-j_r-i_1}$.
\end{enumerate}
\end{proposition}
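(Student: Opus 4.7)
The plan is to prove (1) by direct substitution of the deformed Fourier structure into the profile matrix formula, and then to obtain (2) as an immediate consequence by applying (1) cyclically to the product expression for $X$ from Theorem 3.5.

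For (1), I would start from the definition
$$Q_{iajb,kcld}=\frac{1}{MN}\sum_{me}\frac{H_{me,ia}H_{me,ld}}{H_{me,kc}H_{me,jb}}$$
and plug in $H_{me,ia}=Q_{ma}(F_M)_{mi}(F_N)_{ea}$, and similarly for the other three entries. The key observation is that the summand factors cleanly into three independent pieces: a $Q$-piece $\frac{Q_{ma}Q_{md}}{Q_{mc}Q_{mb}}$ depending only on $m$, an $F_M$-piece depending only on $m$, and an $F_N$-piece depending only on $e$. Using $(F_M)_{mi}=w^{mi}$ with $w=e^{2\pi i/M}$ and $(F_N)_{ea}=v^{ea}$ with $v=e^{2\pi i/N}$, the $F_M$-piece collapses to $w^{m(i+l-k-j)}$ and the $F_N$-piece to $v^{e(a+d-c-b)}$. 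Summing over $e$ then yields $N\delta_{a+d,c+b}=N\delta_{a-b,c-d}$, cancelling the $N$ in the normalization; what is left over the sum in $m$ is exactly $M\cdot R^{i+l-k-j}_{ab,cd}$, and cancelling the $M$ yields the formula in (1).

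For (2), I would invoke the definition from Theorem 3.5, applied to the multi-index $(i_s,a_s)$:
$$X_{i_1a_1\ldots i_ra_r,j_1b_1\ldots j_rb_r}=Q_{i_1a_1j_1b_1,i_2a_2j_2b_2}\,Q_{i_2a_2j_2b_2,i_3a_3j_3b_3}\cdots Q_{i_ra_rj_rb_r,i_1a_1j_1b_1},$$
and substitute (1) into each factor. The resulting Kronecker deltas $\delta_{a_s-b_s,a_{s+1}-b_{s+1}}$ multiply (cyclically in $s$) to $\delta_{a_1-b_1,\ldots,a_r-b_r}$, while matching the index positions $(i,a,j,b,k,c,l,d)$ in (1) against the positions in the $s$-th factor $(i_s,a_s,j_s,b_s,i_{s+1},a_{s+1},j_{s+1},b_{s+1})$ produces the superscript $i_s+j_{s+1}-i_{s+1}-j_s$ on each $R$, giving exactly the formula in (2).

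The main obstacle here is purely bookkeeping: there is no conceptual content beyond the fact that $F_M$ and $F_N$ decouple, and that summation over the $F_N$-index produces the $a-b=c-d$ constraint. The care that needs to be taken is in tracking the asymmetric role of the four indices in the profile matrix $Q_{ab,cd}$ coming from the inner product $\langle H_a/H_b,H_c/H_d\rangle$, so that the sign pattern $i+l-k-j$ in the exponent of $w$ is assigned correctly; once this is fixed, extending to (2) is only a matter of labeling the cyclic product.
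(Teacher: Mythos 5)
Your proposal is correct and follows essentially the same route as the paper: factor each entry of $H=F_M\otimes_QF_N$ as $H_{me,ia}=Q_{ma}(F_M)_{mi}(F_N)_{ea}$ inside the profile-matrix sum, observe that the $Q$-, $F_M$- and $F_N$-pieces decouple so that the $e$-sum produces $N\delta_{a+d,b+c}=N\delta_{a-b,c-d}$ and the $m$-sum produces $R^{i+l-k-j}_{ab,cd}$, then feed this into the cyclic product formula of Theorem 3.5 to get (2). The only cosmetic difference is that the paper first records the factorization for a general deformation $K\otimes_QL$ before specializing to Fourier, which changes nothing in substance.
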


\begin{proof}
First, for a general deformation $H=K\otimes_QL$, we have:
\begin{eqnarray*}
Q_{iajb,kcld}
&=&\frac{1}{MN}\sum_{me}\frac{H_{me,ia}H_{me,ld}}{H_{me,kc}H_{me,jb}}
=\frac{1}{MN}\sum_{me}\frac{Q_{ma}K_{mi}L_{ea}Q_{md}K_{ml}L_{ld}}{Q_{mc}K_{mk}L_{ec}Q_{mb}K_{mj}L_{eb}}\\
&=&\frac{1}{M}\sum_m\frac{Q_{ma}Q_{md}}{Q_{mc}Q_{mb}}\cdot\frac{K_{mi}K_{ml}}{K_{mk}K_{mj}}\cdot\frac{1}{N}\sum_e\frac{L_{ea}L_{ed}}{L_{ec}L_{eb}}
\end{eqnarray*}

Thus for a deformed Fourier matrix $H=F_M\otimes_QF_N$ we have:
$$Q_{iajb,kcld}=\delta_{a+d,b+c}\frac{1}{M}\sum_m\frac{Q_{ma}Q_{md}}{Q_{mc}Q_{mb}}w^{m(i+l-k-j)}$$

But this gives (1), and then (2), and we are done.
\end{proof}

With the above formulae in hand, we can now state and prove:

\begin{theorem}
For the matrix $H=F_M\otimes_QF_N$ we have
$$\mu_{H}=\mu_{H^t}$$
for any value of the parameter matrix $Q\in M_{M\times N}(\mathbb T)$.
\end{theorem}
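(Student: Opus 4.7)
\bigskip

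\noindent\textbf{Proof Plan.}

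My plan is to establish the stronger statement $\mu_H^r=\mu_{H^t}^r$ for every $r\geq0$; the Cesàro formula of Proposition 3.3(2) then yields $\mu_H=\mu_{H^t}$. First, a direct calculation from $H_{ia,jb}=Q_{ib}(F_M)_{ij}(F_N)_{ab}$ gives $(H^t)_{ia,jb}=Q_{ja}(F_M)_{ij}(F_N)_{ab}$, which is precisely the matrix $L':=F_N\otimes_{Q^t}F_M$ after the row/column permutation $(ia)\leftrightarrow(ai)$. Since equivalent Hadamard matrices have the same Hopf image, $\mu_{H^t}^r=\mu_{L'}^r$, and the task reduces to showing $tr((X^H)^p)=tr((X^{L'})^p)$ for all $p$, where $X^H,X^{L'}$ are given by Proposition 5.1(2).

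For the computation, I expand each $R$-factor via its Fourier definition $R^x_{ab,cd}=\frac{1}{M}\sum_m w_M^{mx}Q_{ma}Q_{md}/(Q_{mc}Q_{mb})$ and substitute into the $p$-fold product defining $tr(X^p)$. This writes $c_p^r(H)$ as a multiple sum over a $p\times r$ array of indices in $\mathbb Z_M^2\times\mathbb Z_N^2$, weighted by a product of $Q$-cross-ratios and Fourier phases $w_M^{(I-I')(K-K')}\cdot w_N^{(A-A')(B-B')}$; the analogous expansion holds for $c_p^r(L')$ with $Q$ replaced by $Q^t$. I then perform Fourier inversion: summing the ``inner'' indices $K$ and $A$ against their respective phases collapses each factor $\frac{1}{M}\sum_m w_M^{mx}$ to a Kronecker $\delta_{x,0}$, reducing the moment to a constrained sum of $Q$-plaquette factors
$$\Phi(u,v,\gamma,\beta):=\frac{Q_{u,v}\,Q_{u+\gamma,v-\beta}}{Q_{u,v-\beta}\,Q_{u+\gamma,v}}$$
parametrised by shift variables $\gamma_s\in\mathbb Z_M$ and $\beta_\ell\in\mathbb Z_N$ subject to $\sum_s\gamma_s=\sum_\ell\beta_\ell=0$. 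The same reduction on the $L'$-side yields a sum of identical shape with $Q$ replaced by $Q^t$ and the roles of the $s$- and $\ell$-cycles in the shift variables appropriately relabelled.

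The final identification relies on the manifest symmetry $\Phi^{Q^t}(v,u,\beta,\gamma)=\Phi^Q(u,v,-\gamma,-\beta)$, combined with the sign-change bijections $\gamma\mapsto-\gamma$, $\beta\mapsto-\beta$ on the constraint sets, which interchanges the two master sums term-by-term. The main obstacle is the combinatorial bookkeeping: verifying that the two parametrisations of the constraint sets---whose free-parameter counts already agree at $M^{p+r-1}N^{p+r-1}$ on each side---are in bijective correspondence under the swap implementing the $\Phi$-symmetry, so that the $Q$-plaquette products match up across the $p\times r$-grid. Once this matching is verified, $c_p^r(H)=c_p^r(L')$ follows for all $p,r$, and Cesàro averaging closes the proof.
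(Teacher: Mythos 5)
Your plan is correct and follows essentially the same route as the paper's proof: both identify $H^t$ with $F_N\otimes_{Q^t}F_M$, expand $c_p^r$ via the $R$-factors of Proposition 5.1, Fourier-sum the $F_M$- and $F_N$-phases so that the moment becomes a constrained sum over two $p\times r$ arrays of indices weighted by $Q$-cross-ratios, and then observe that this expression is symmetric under swapping the two arrays together with $Q\leftrightarrow Q^t$. The remaining bookkeeping you flag is exactly the verification carried out in the paper, so no new idea is needed.
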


\begin{proof}
We use the matrices $X,R$ constructed in Proposition 5.1 above. According to the result in Proposition 5.1 (2), we have the following formula:
\begin{eqnarray*}
c_p^r
&=&\frac{1}{N^r}\sum_{a_1^1\ldots a_p^r}X_{a_1^1\ldots a_1^r,a_2^1\ldots a_2^r}\ldots\ldots X_{a_p^1\ldots a_p^r,a_1^1\ldots a_1^r}\\
&=&\frac{1}{N^r}\sum_{a_1^1\ldots a_p^r}\sum_{i_1^1\ldots i_p^r}\delta_{a_1^1-a_2^1,\ldots,a_1^r-a_2^r}R_{a_1^1a_2^1,a_1^2a_2^2}^{i_1^1+i_2^2-i_1^2-i_2^1}\ldots\ldots R_{a_1^ra_2^r,a_1^1a_2^1}^{i_1^r+i_2^1-i_1^1-i_2^r}\\
&&\ldots\\
&&\delta_{a_p^1-a_1^1,\ldots,a_p^r-a_1^r}R_{a_p^1a_1^1,a_p^2a_1^2}^{i_p^1+i_1^2-i_1^1-i_p^2}\ldots\ldots R_{a_p^ra_1^r,a_p^1a_1^1}^{i_p^r+i_1^1-i_p^1-i_1^r}
\end{eqnarray*}

Observe that the conditions on the $a$ indices, coming from the Kronecker symbols, state that the columns of $a=(a_i^j)$ must differ by vertical vectors of type $(s,\ldots,s)$.

Now let us compute the sum over $i$ indices, obtained by neglecting the Kronecker symbols. According to the formula of $R_{ab,cd}^x$ in Proposition 5.1, this is:
\begin{eqnarray*}
S
&=&\frac{1}{N^{pr}}\sum_{i_1^1\ldots i_p^r}\sum_{m_1^1\ldots m_p^r}w^{E(i,m)}\frac{Q_{m_1^1a_1^1}Q_{m_1^1a_2^2}}{Q_{m_1^1a_2^1}Q_{m_1^1a_1^2}}\ldots\ldots\frac{Q_{m_1^ra_1^r}Q_{m_1^ra_2^1}}{Q_{m_1^ra_2^r}Q_{m_1^ra_1^1}}\\
&&\ldots\\
&&\frac{Q_{m_p^1a_p^1}Q_{m_p^1a_1^2}}{Q_{m_p^1a_1^1}Q_{m_p^1a_p^2}}\ldots\ldots\frac{Q_{m_p^ra_p^r}Q_{m_p^ra_1^1}}{Q_{m_p^ra_1^r}Q_{m_p^ra_p^1}}
\end{eqnarray*}

Here the exponent appearing at right is given by:
\begin{eqnarray*}
E(i,m)
&=&m_1^1(i_1^1+i_2^2-i_1^2-i_2^1)+\ldots+m_1^r(i_1^r+i_2^1-i_1^1-i_2^r)\\
&&\ldots\\
&&+m_p^1(i_p^1+i_1^2-i_p^2-i_1^1)+\ldots+m_p^r(i_p^r+i_1^1-i_p^1-i_1^r)
\end{eqnarray*}

Now observe that this exponent can be written as:
\begin{eqnarray*}
E(i,m)
&=&i_1^1(m_1^1-m_1^r-m_p^1+m_p^r)+\ldots+i_1^r(m_1^r-m_1^{r-1}-m_p^r+m_p^{r-1})\\
&&\ldots\\
&&+i_p^1(m_p^1-m_p^r-m_{p-1}^1+m_{p-1}^r)+\ldots+i_p^r(m_p^r-m_p^{r-1}-m_{p-1}^r+m_{p-1}^{r-1})
\end{eqnarray*}

With this formula in hand, we can perform the sum over the $i$ indices, and the point if that the resulting condition on the $m$ indices will be exactly the same as the above-mentioned condition on the $a$ indices. Thus, we obtain a formula as follows, where $\Delta(.)$ is a certain product of Kronecker symbols:
\begin{eqnarray*}
c_p^r
&=&\frac{1}{N^r}\sum_{a_1^1\ldots a_p^r}\sum_{m_1^1\ldots m_p^r}\Delta(a)\Delta(m)\frac{Q_{m_1^1a_1^1}Q_{m_1^1a_2^2}}{Q_{m_1^1a_2^1}Q_{m_1^1a_1^2}}\ldots\ldots\frac{Q_{m_1^ra_1^r}Q_{m_1^ra_2^1}}{Q_{m_1^ra_2^r}Q_{m_1^ra_1^1}}\\
&&\ldots\\
&&\frac{Q_{m_p^1a_p^1}Q_{m_p^1a_1^2}}{Q_{m_p^1a_1^1}Q_{m_p^1a_p^2}}\ldots\ldots\frac{Q_{m_p^ra_p^r}Q_{m_p^ra_1^1}}{Q_{m_p^ra_1^r}Q_{m_p^ra_p^1}}
\end{eqnarray*}

The point now is that when replacing $H=F_M\otimes_QF_N$ with its transpose matrix, $H^t=F_N\otimes_{Q^t}F_M$, we will obtain exactly the same formula, with $Q$ replaced by $Q^t$. But, with $a_x^y\leftrightarrow m_x^y$, this latter formula will be exactly the one above, and we are done.
\end{proof}

\end{document}